\numberwithin{equation}{section}
\renewcommand\vec{\bm}
\newtheorem{theorem}{Theorem}[section]
\newtheorem{lemma}[theorem]{Lemma}
\newtheorem{Proposition}[theorem]{Proposition}
\newtheorem{Question}[theorem]{Question}
\DeclarePairedDelimiter{\floor}{\lfloor}{\rfloor}
\title{Finding large additive and multiplicative Sidon sets in sets of integers}
\author{Yifan Jing}
\address{Mathematical Institute, University of Oxford, Oxford OX2 6GG, UK}
\email{jing@maths.ox.ac.uk}
\author{Akshat Mudgal}
\address{Mathematical Institute, University of Oxford, Oxford OX2 6GG, UK}
\email{mudgal@maths.ox.ac.uk}
\thanks{YJ and AM are supported by Ben Green’s Simons Investigator Grant, ID:376201. }
\subjclass[2020]{11B30, 11B83, 05D40} 
\keywords{Sidon sets, Sum-product phenomenon, Incidence theory, Higher energies}
\renewcommand\vec{\bm}
\begin{document}

\begin{abstract}
Given $h,g \in \mathbb{N}$, we write a set $X \subset \mathbb{Z}$ to be a $B_{h}^{+}[g]$ set if for any $n \in \mathbb{Z}$, the number of solutions to the additive equation
$n = x_1 + \dots + x_h$
with $x_1, \dots, x_h \in X$ is at most $g$, where we consider two such solutions to be the same if they differ only in the ordering of the summands. We define a multiplicative $B_{h}^{\times}[g]$ set analogously. In this paper, we prove, amongst other results, that there exist absolute constants $g \in \mathbb{N}$ and $\delta>0$ such that for any $h \in \mathbb{N}$ and for any finite set $A$ of integers, the largest $B_{h}^{+}[g]$ set $B$ inside $A$ and the largest $B_{h}^{\times}[g]$ set $C$ inside $A$ satisfy
\[ \max \{ |B| , |C| \} \gg_{h} |A|^{(1+ \delta)/h }. \]
In fact, when $h=2$, we may set $g = 31$, and when $h$ is sufficiently large, we may set $g = 1$ and $\delta \gg (\log \log h)^{1/2 - o(1)}$. The former makes progress towards a recent conjecture of Klurman--Pohoata and quantitatively strengthens previous work of Shkredov.
\end{abstract}

\maketitle

\section{Introduction}

In this paper, we investigate a question lying at the interface of two well-known topics in additive and combinatorial number theory; these being the study of \emph{Sidon sets} and the Erd\H{o}s--Szemer\'{e}di sum-product phenomenon. We begin by introducing the former, and thus, given natural numbers $g,h$, we write a finite set $A$ of natural numbers to be a $B^{+}_h[g]$ set if for any $n \in \mathbb{N}$, the number of distinct solutions to the equation
\begin{equation} \label{add4}
    n = a_1 + \dots + a_h
\end{equation} 
with $a_1, \dots, a_h \in A$ is at most $g$. Here, we consider two such solutions to be the same if they differ only in the ordering of the summands. We define a $B^{\times}_{h}[g]$ set similarly by replacing the additive equation $\eqref{add4}$ with its multiplicative analogue 
$n = a_1 \dots a_h$. When $h=2$ and $g=1$, we refer to $B^+_{h}[g]$ and $B^{\times}_h[g]$ sets as Sidon and multiplicative Sidon sets respectively. 
\par

Sidon sets and their many generalisations have been analysed in various works from many different perspectives, with a classical problem concerning the size of the largest $B_{h}^{+}[g]$ subset of $[N] := \{1, \dots, N\}$. Writing $B$ to be the largest $B_h^{+}[1]$ subset of $[N]$, it is known from work of Singer \cite{Si1938} and Bose--Chowla \cite{BC1963} that $|B| > N^{1/h}$ for infinitely many choices of $N$. As for upper bounds, when $h=2$, Erd\H{o}s and Tur\'{a}n \cite{ET1941} proved that 
\[ |B| < N^{1/2} + N^{1/4} + 1.\]
While there were subsequent improvements to the constant term in the upper bound, it was only very recently that the $N^{1/4}$ term was improved to $0.998N^{1/4}$ for all sufficiently large $N$ by Balogh, F\"{u}redi and Roy \cite{BFS2021}. The case when $h \geq 3$ appears to be significantly harder. Indeed, there has been a rich history of work surrounding this problem, see, for instance, \cite{Ci2001} and the references therein, and the best known upper bound follows from work of Green \cite{Gr2001}, who showed that
\begin{equation} \label{erds2}
 |B| \leq ( (\floor{h/2}!)^2 \sqrt{\pi h/2}  (1 + \epsilon(h)) )^{1/h} N^{1/h} ,  
\end{equation}
where $\epsilon(h) \to 0$ as $h \to \infty$. Similar upper and lower bounds are known for the size of the largest $B_h^{+}[g]$ subset of $[N]$, and we refer the reader to \cite{JTT2021} for the best known estimates towards this problem. Thus, when $h \geq 3$, there is a large gap between the known upper and lower bounds for the size of the largest $B_h^{+}[1]$ set in $[N]$, and closing this gap is a major open problem in the area, see for example \cite[Problem $11.14$]{Go2000}.

On the other hand, finding large $B_{h}^{\times}[1]$ sets in $[N]$ is relatively elementary, since the set of prime numbers smaller than $N$ form such a set. The fact that this example is essentially sharp was shown by Erd\H{o}s \cite{Er1969}, who proved that the largest multiplicative Sidon set $C \subset [N]$ satisfies
\[ N^{3/4} (\log N)^{-3/2} \ll |C| - \pi(N) \ll N^{3/4} (\log N)^{-3/2}, \]
where $\pi(N)$ counts the number of prime numbers in $[N]$. The size of large $B_{h}^{\times}[1]$ sets have also been investigated for $h \geq 3$ and are known to be close to $\pi(N)$, see \cite{Pa2015} for more details.
\par

Hence, while the largest $B_{h}^{+}[1]$ sets in $[N]$ are hard to characterise and have size close to $N^{1/h}$, one can find substantially large $B_{h}^{\times}[1]$ sets in $[N]$ which have size $(1 + o(1))N/\log N$. This discrepancy can be justified by noting that the set $[N]$ exhibits large amounts of additive structure but has relatively low multiplicative structure. Similarly, if one writes $B$ to be the largest $B_h^{+}[1]$ subset and $C$ to be the largest $B_h^{\times}[1]$ subset of $\{2,4, \dots, 2^N\}$, one can see that $|B| \gg_h N$ while $N^{1/h} \leq |C| \ll_h N^{1/h}$. This is reminiscent of a well-known conjecture of Erd\H{o}s and Szemer\'{e}di \cite{ES1983} which roughly states that large amounts of additive and multiplicative structure cannot simultaneously coexist in a given finite set $A \subset \mathbb{Z}$. Note that for any $A \subset \mathbb{Z}$ with $|A| =N$, writing $B$ and $C$ to be the largest $B_h^{+}[1]$ and $B_h^{\times}[1]$ subsets of $A$, one has $|B|,|C| \gg_h N^{1/h}$ due to a classical result of  Koml\'{o}s--Sulyok--Szemer\'{e}di \cite{KSS1975}. On the other hand, motivated by the sum-product philosophy and the above examples, one might expect that $\max\{|B|, |C|\} \gg_h N^{(1+ \delta_h)/h}$, for some $\delta_h >0$.
\par

This heuristic was exemplified in a recent problem posed by Klurman and Pohoata \cite{Po2021}, who conjectured that in the above setting, whenever $h=2$, one must have
\begin{equation} \label{cocon}
\max \{ |B|, |C| \} \gg_{\delta} |A|^{1/2 + \delta}, 
\end{equation} 
for each $\delta \in (0,1/2)$. Our main result in this paper confirms an approximate version of the preceding heuristic for all $h \in \mathbb{N}$. 

\begin{theorem} \label{main}
There exist absolute constants $g \in \mathbb{N}$ and $\delta>0$ such that for every $h \in \mathbb{N}$ and for every finite set $A \subset \mathbb{Z}$, writing $B$ and $C$ to be the largest $B_{h}^+[g]$ and $B^{\times}_h[g]$ subsets of $A$ respectively, one has
\[ \max\{|B|, |C|\} \gg_h |A|^{(1 + \delta)/h}. \]
\end{theorem}

Theorem \ref{main} provides non-trivial sum-product estimates in a straightforward manner. Indeed, given $h \in \mathbb{N}$ and a finite set $X \subset \mathbb{R}$, we define its $h$-fold sumset $hX$ and $h$-fold product set $X^{(h)}$ as
\[ hX = \{ x_1 + \dots + x_h : x_1, \dots, x_h \in X\} \ \ \text{and} \ \ X^{(h)} = \{x_1 \dots x_h : x_1, \dots, x_h \in X\} . \]
For any finite set $A \subset \mathbb{Z}$, one has the trivial estimates $|A| \leq |hA|, |A^{(h)}| \leq |A|^h$. The aforementioned Erd\H{o}s--Szemer\'{e}di sum-product conjecture asserts that for any $h \in \mathbb{N}$ and any $\varepsilon >0$ and any $A \subset \mathbb{Z}$, one has 
\begin{equation} \label{j1}
     \max\{ |hA|, |A^{(h)}| \} \gg_{h, \varepsilon} |A|^{h - \varepsilon}.  
     \end{equation}
Theorem \ref{main} implies that for any finite $A \subset \mathbb{Z}$, one has
\begin{equation} \label{bc45}
\max\{|hA| , |A^{(h)}|\} \geq \max\{|hB| ,|C^{(h)}|\} \gg_{g,h} |B|^h + |C|^h \gg_{h}   |A|^{1 + \delta}, 
\end{equation}
thus providing non-trivial results towards this problem.

It appears to be the case that the problem of obtaining large $B_h^{+}[1]$ and $B_h^{\times}[1]$ sets inside arbitrary sets seems harder and different when compared to the sum-product conjecture. Firstly, noting $\eqref{j1}$, one might expect every finite set $A \subset \mathbb{Z}$ to contain either a $B_{h}^+[1]$ set or a $B_{h}^{\times}[1]$ set of size close to $|A|^{1 - o(1)}$, but this does not hold true for any $h \geq 2$. Indeed, various elementary constructions of Green--Peluse (unpublished), Roche-Newton--Warren \cite{RNW2021} and Shkredov \cite{Sh2021} imply that \eqref{cocon} can not hold for any $\delta > 1/6$. Utilising some graph theoretic results from \cite{NV05}, we generalise their constructions for every $h \geq 3$.

\begin{Proposition}\label{prop: construction}
Let $h \geq 2$ and $N$ be natural numbers. Then there exists a set $A \subset \mathbb{N}$ with $|A| \gg N$ such that the largest $B_h^+[1]$ subset $B$ and the largest $B_h^\times[1]$ subset $C$ of $A$ satisfy
\[
\max\{|B|, |C|\}\ll_h \begin{cases}
|A|^{\frac{1}{2}+\frac{1}{2h+2}}\quad&\text{ when }h\text{ is even},\\
|A|^{\frac{1}{2}+\frac{1}{2h}}\quad&\text{ when }h\text{ is odd}.
\end{cases}
\]
\end{Proposition}

Secondly, we observe that a non-trivial sum-product estimate of the form \eqref{j1} for any fixed $h_0 \geq 2$ and $\delta_0 >0$ implies a non-trivial sum-product result for every $h \geq 2$ with $\delta = \delta_0/h_0$. On the other hand, even the most potentially optimal version of Theorem \ref{main} in the $h=2$ case, with, say, $g=1$ and $\delta\leq1/6$, does not seem to deliver any non-trivial versions of Theorem \ref{main} for any $h\geq3$ in a straightforward manner, see end of \S2 for further details. In fact, this is one of the reasons why we need to develop quite different approaches for different values of $h$. Indeed, we will prove Theorem \ref{main} via a combination of Theorems \ref{th3}, \ref{th2} and \ref{th1}, which deal with the cases when $h > C$ for some large absolute constant $C>0$, when $3 \leq h \leq C$, and when $h=2$ respectively.

We briefly mention the best known results towards the sum--product conjecture. When $h=2$, Rudnev--Stevens \cite{RS2022} proved that $\max\{|2A|,|A^{(2)}|\} \gg_c |A|^{4/3 + c}$ for any $c< 2/1167$, while for large values of $h$, one has a famous result of Bourgain--Chang \cite{BC2005} which states that
\begin{equation} \label{bces4}
 \max\{ |hA|, |A^{(h)}| \} \gg |A|^{c' (\log h)^{1/4}}     
\end{equation} 
for some $c'>0$. This exponent has since only been improved once by P\'{a}lv\"{o}lgyi--Zhelezov \cite{PZ2021} to $(\log h)^{1 - o(1)}$. Noting these estimates, it is natural to ask whether one can allow $\delta \to \infty$ as $h \to \infty$ in Theorem \ref{main}, and our next result confirms this in the affirmative.

\begin{theorem} \label{th3}
Let $h$ be a natural number, let $A \subset \mathbb{Z}$ be a finite set, and let $B$ and $C$ be the largest $B_{h}^{+}[1]$ and $B_h^{\times}[1]$ sets in $A$ respectively. Then 
\[ \max \{ |B|, |C| \} \gg |A|^{\frac{\eta_h}{h}} ,\]
where $\eta_h \gg (\log \log h)^{1/2 - o(1)}$.
\end{theorem}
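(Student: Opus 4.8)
The plan is to establish a dichotomy: either $A$ has enough multiplicative structure to force a large multiplicative Sidon set, or it lacks such structure, in which case we can iteratively extract a large additive Sidon set. The key quantitative input should be a sum-product type estimate — most likely a bound on the multiplicative energy or on the number of large multiplicative-dimension structures inside $A$ — applied in an amplified, iterated form to leverage the parameter $h$. The iteration structure is what produces the $(\log\log h)^{1/2 - o(1)}$ gain: each "round" buys a small fixed additive advantage, and one can afford roughly $\log\log h$ many rounds before the set is exhausted, but the per-round gains compound sub-linearly, hence the square-root exponent and the $o(1)$ loss.

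First I would reduce to a clean normalized setting: assume $|A| = N$, and suppose for contradiction that the largest multiplicative $B_h^{\times}[1]$ set $C \subseteq A$ has $|C| \le N^{\eta_h/h}$ with $\eta_h$ as large as we can manage. A standard observation (essentially the prime-divisor / dyadic-pigeonhole argument) converts an upper bound on multiplicative Sidon sets into a statement that $A$ is concentrated on a bounded-dimension multiplicative structure — roughly, $A$ is covered by few multiplicative progressions or lies in a set of small multiplicative doubling after passing to a large subset. I would make this precise using the hypothesis that $C$ is small: if no large multiplicative Sidon set exists, then the multiplicative energy $E^{\times}_k(A)$ is large for a suitable $k$, which by Plünnecke–Ruzsa-type reasoning in the multiplicative group (applied to a logarithm image of $A$, or to the exponent vectors over the primes) places a positive-proportion subset $A'$ of $A$ inside a generalized arithmetic progression of bounded rank in the exponent lattice.

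Next, inside such a multiplicatively structured $A'$, I would invoke a sum-product / incidence estimate (Elekes–Szemerédi or the Solymosi-type bound, or more likely the higher-energy variants referenced in the keywords) to show that $A'$ — being multiplicatively structured — must have large additive \emph{dimension}, i.e. cannot be contained in a low-rank arithmetic progression. Then I would extract an additive $B_h^{+}[1]$ set by a greedy/probabilistic selection: in a set of additive dimension $d$ one can find a $B_h^{+}[1]$ set of size roughly $|A'|^{1/h}$ times a factor growing with $d$, and the point is to run this as an iteration — each pass through the sum-product estimate on the leftover set increases the effective dimension, and after $r \asymp \log\log h$ passes one reaches dimension large enough that the greedy Sidon set has size $N^{\eta_h/h}$ with $\eta_h \asymp \sqrt{r} = (\log\log h)^{1/2}$. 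Balancing the two horns of the dichotomy then forces $\max\{|B|,|C|\} \gg N^{\eta_h/h}$.

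The hard part will be making the iteration's bookkeeping lossless enough to preserve a $\sqrt{\log\log h}$ gain rather than collapsing to a constant: each application of Plünnecke–Ruzsa and each sum-product step costs a multiplicative constant and shrinks the working set, so one must arrange that the dimension grows \emph{multiplicatively} per round (so that $\log(\text{dimension})$ grows linearly, giving the extra $\sqrt{\cdot}$) while the set size is only divided by a constant per round, and the total number of rounds stays below the point where the set becomes too small to contain any Sidon set of the target size. Controlling the $o(1)$ — which presumably absorbs the accumulated constants and a $\log h$-type factor from the number of rounds — and verifying that the greedy extraction genuinely converts "large additive dimension" into "large $B_h^{+}[1]$ set" with the claimed exponent are the two places where the argument is most delicate, and I would expect these to consume the bulk of the technical work.
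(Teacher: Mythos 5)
Your high-level intuition is sound, but the route you sketch is genuinely different from the paper's and considerably harder. Let me flag the main structural mismatch and one real gap.

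The paper does not run a dichotomy-by-contradiction, nor does it iterate within this proof. Instead, it cites as a black box a \emph{low-energy decomposition} result (stated here as Lemma~\ref{mu1}, from \cite{Mu2021d}): any finite $A \subseteq \mathbb{Z}$ can be partitioned as $A = B \cup C$ with
\[
E_{s,2}(B) \ll_s |A|^{2s - \eta_s}, \qquad M_{s,2}(C) \ll_s |C|^{2s - \eta_s}, \qquad \eta_s \gg (\log\log s)^{1/2 - o(1)}.
\]
All of the Bourgain--Chang-style iteration and the $(\log\log s)^{1/2}$ bookkeeping live \emph{inside} that cited result, not in the proof of Theorem~\ref{th3}. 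Once the decomposition is in hand, the proof is three lines: one of $B, C$ has size $\geq |A|/2$, and the random-sampling-and-deletion lemmas (Lemmas~\ref{gens} and~\ref{mu2}) convert the energy bound on that piece directly into a $B_s^+[1]$ or $B_s^\times[1]$ subset of size $\gg |A|^{(\eta_s + 1/s)/(2s)}$. Your proposal, by contrast, would re-derive the decomposition iteration from scratch — that is essentially reproving \cite{Mu2021d} — and frames the argument as ``assume $C$ is small, deduce multiplicative structure, run sum-product,'' which is a one-sided contrapositive rather than the symmetric partition the paper uses.

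The concrete gap in your approach: ``no large $B_h^\times[1]$ subset'' does not cleanly imply ``bounded multiplicative dimension'' or even ``large multiplicative energy.'' The contrapositive of an extraction lemma of the form ``low $M_{s,2}(A)$ implies a large multiplicative Sidon set'' only gives you ``$M_{s,2}(A)$ is not small,'' and there is a regime where $M_{s,2}(A)$ is mid-sized, $E_{s,2}(A)$ is also mid-sized, and your dichotomy produces nothing. The paper sidesteps this entirely by decomposing $A$ rather than arguing from the absence of one type of Sidon set. Your ``additive dimension $d$'' heuristic for the greedy extraction is also not what the paper does: the actual conversion is a probabilistic deletion argument. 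One samples a random subset $A'$ at density $p = |A|^{1/s - 1 + \delta}$, counts the expected number of ``bad'' $2s$-tuples (solutions to $a_1 + \cdots + a_s = a_{s+1} + \cdots + a_{2s}$, stratified by the number $l$ of distinct coordinates via the quantities $\Sigma_{l,s,2}$ of \S4), shows this expectation is at most half of $\mathbb{E}|A'|$ under the energy hypothesis, and deletes one element per surviving bad tuple. That counting — in particular the $\Sigma_{l,s,2}$ bound via H\"older and Lemma~\ref{lem: sigma l,s,2} — is the technical heart of the extraction step, and your sketch does not identify it. In short: correct instinct that energy bounds plus a probabilistic pruning yield Sidon sets, but you have not located the two lemmas that make the proof short, and the iteration you want to run is unnecessary once Lemma~\ref{mu1} is available.
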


Theorem $\ref{th3}$ delivers sum-product estimates akin to those of Bourgain--Chang  in a straightforward manner by exploiting \eqref{bc45}. In fact, a key ingredient in the proof of Theorem $\ref{th3}$ entails amalgamating probabilistic techniques with another generalisation of the sum-product estimate of Bourgain--Chang proved by the second author in \cite{Mu2021d} involving the so-called \emph{low-energy decompositions}, see Lemma $\ref{mu1}$. 
Moreover, while Theorem $\ref{th3}$ provides a much stronger conclusion than Theorem \ref{main} when $h$ is sufficiently large, the former does not give any non-trivial conclusions for potentially smaller values of $h$. In fact, it is precisely these cases that requires most of our focus and make up the core of our argument. We begin by further subdividing this setting into two subcases, the first of these being when $h \geq 3$. 

\begin{theorem} \label{th2}
Let $A \subset \mathbb{Z}$ be a finite set and let $h \geq 3$. Then there exists $g \leq 30 h$ and $\delta_h \gg h^{-3}$ such that the largest $B_h^{+}[g]$ set $B$ inside $A$ and the largest $B_h^{\times}[g]$ set $C$ inside $A$ satisfy
\[ \max \{ |B| , |C| \} \gg |A|^{1/h + \delta_h} . \]
\end{theorem}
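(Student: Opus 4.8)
The plan is to play additive structure against multiplicative structure: I would set up a dichotomy under which, in one regime, a probabilistic alteration produces a large multiplicative $B_h^{\times}[g]$ set, while in the other a structural argument produces a large additive $B_h^{+}[g]$ set by exploiting the rigidity of multiplicatively structured sets with respect to additive relations. Write $N=|A|$. Replacing $A$ with $A\cap\mathbb{Z}_{>0}$ or $-A\cap\mathbb{Z}_{>0}$ and discarding $0$ costs only an absolute factor, so assume $A\subseteq\mathbb{Z}_{>0}$, and fix a parameter $\varepsilon=\varepsilon(h)\asymp h^{-3}$ to be calibrated at the end.

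The first step is to run the dichotomy on the $2h$-fold multiplicative energy $E_h^{\times}(A)=\#\{a_1\cdots a_h=b_1\cdots b_h:\ a_i,b_i\in A\}$ (equivalently, on $|A^{(h)}|$). In the \emph{multiplicatively spread} regime, $E_h^{\times}(A)\le N^{h+\varepsilon}$, log-convexity of the energies in the number of variables gives $E_k^{\times}(A)\le N^{k+\varepsilon}$ for all $2\le k\le h$ at once, so $A$ is quasi-Sidon multiplicatively at every scale. I would then obtain a $B_h^{\times}[g]$ set of size $\gg N^{1/h+\delta_h}$ by keeping each element of $A$ independently with a suitable probability $p$ and deleting one element from every over-represented $h$-fold product. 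The principal term $p^{2h}E_h^{\times}(A)$ in the expected number of surviving coincidences is easily handled; the delicate point --- where divisor-bound and incidence-type estimates enter --- is that the intermediate correlations (pairs of distinct $h$-multisets with equal product that share some entries, or that are built from few distinct integers, such as the multiplicative three-term progressions counted by $E_2^{\times}(A)$) are all controlled by the $E_k^{\times}(A)$, so that allowing $g$ as large as $30h$ provides the budget to delete these too.

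In the \emph{multiplicatively structured} regime, $E_h^{\times}(A)> N^{h+\varepsilon}$, I would pass to a dyadic level, apply the multiplicative Balog--Szemer\'edi--Gowers theorem and the Pl\"unnecke--Ruzsa inequality --- working in the free abelian group on the primes via factorisations --- together with a Freiman-type argument, to locate $A'\subseteq A$ with $|A'|\gg N^{1-O(\varepsilon)}$ which is a dense subset of a generalised geometric progression of controlled dimension. The crucial structural input is then that, up to discarding a negligible part, such an $A'$ is already a $B_h^{+}[g]$ set: an additive coincidence $a_1+\dots+a_h=b_1+\dots+b_h$ among its elements, after splitting off its degenerate sub-relations, is governed by a bounded-degree multiplicative (unit-type) equation and there are only boundedly many non-degenerate ones, so after removing few elements $A'$ yields a $B_h^{+}[g]$ (indeed $B_h^{+}[1]$ after a little more pruning) set of size $\gg N^{1-o(1)}$, far exceeding $N^{1/h+\delta_h}$.

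I expect the structured case, together with the bookkeeping tying the two regimes together, to be the main obstacle. The sensitive points are: (i) controlling the dimension of the generalised geometric progression produced by the Freiman-type step --- a crude application would let it grow with $N$ and destroy the ``few additive relations'' principle, so one wants a structural description calibrated to the low-energy hypothesis rather than a black-box theorem; and (ii) tracking every $O(\varepsilon)$-loss through the Balog--Szemer\'edi--Gowers, Pl\"unnecke--Ruzsa and Freiman steps, and through the sum-product input underpinning the dichotomy, so that the final exponent survives as $\delta_h\gg h^{-3}$ while the collision budget stays within $g\le 30h$.
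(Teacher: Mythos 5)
Your proposal has the same broad shape as the paper's argument --- dichotomise on an energy, use random sampling plus deletion in the ``spread'' regime, and in the ``structured'' regime extract a subset whose structure of one kind forces quasi-randomness of the other kind --- but the dichotomy is run in the opposite direction, and the structured case as you set it up has a genuine gap. The paper dichotomises on the higher \emph{additive} energy $E_{s,k}(A)$ with $k=30s$. In the spread regime this feeds directly into the deletion machinery (Lemmata \ref{lim2} and \ref{hsh}) to produce a $B_s^{+}[k-1]$ set, and the choice $g = k-1 \leq 30h-1$ is exactly what comes out; the factor $g$ is not a budget for ``intermediate correlations'' but the price of using a $k$-fold rather than $2$-fold energy, and no divisor-bound or incidence input is needed at this step. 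In the structured regime, $E_{s,k}(A)$ large forces $\sup_n r_s(A;n)$ large, hence $E_{2,2}(A)$ large, and Balog--Szemer\'edi--Gowers plus Pl\"unnecke--Ruzsa give a dense $A'$ with small $|A'+A'|$; then Solymosi's inequality $M_{2,2}(A')\ll|A'+A'|^2\log|A'|$ converts this \emph{immediately} into a low multiplicative energy bound, with no dimension or rank appearing anywhere, and the deletion lemma yields a large $B_s^{\times}[1]$ set.

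Your mirrored version of the structured case is where the argument breaks. You dichotomise on $E_h^{\times}(A)$ and, in the structured regime, propose multiplicative BSG plus a Freiman-type theorem to put a dense $A'$ inside a generalised geometric progression ``of controlled dimension'', then use $S$-unit-equation rigidity to kill additive relations. But with doubling constant $K=N^{O(\varepsilon)}$, every known Freiman-type theorem in a torsion-free abelian group produces a progression of rank at least polynomial in $K$, so the rank grows with $N$; the Evertse--Schlickewei--Schmidt constants then also grow with $N$, and ``only boundedly many non-degenerate solutions'' is no longer a statement you can use. You flag exactly this as sensitive point (i), but it is not a bookkeeping issue --- it is the reason this direction does not have a clean quantitative implementation, and the paper deliberately avoids it. There is no symmetric analogue of Solymosi's theorem of the form $E_{2,2}(A)\ll|AA|^2\log^{O(1)}|A|$ available to play the same role, which is precisely why the paper chooses to go additive structure $\Rightarrow$ low multiplicative energy rather than the reverse. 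A secondary issue: your log-convexity claim that $E_h^{\times}(A)\leq N^{h+\varepsilon}$ propagates to $E_k^{\times}(A)\leq N^{k+\varepsilon}$ for smaller $k$ runs the inequality of Lemma \ref{awk} the wrong way (that lemma upper-bounds higher energies by lower ones, not vice versa); fortunately the deletion argument only needs the top energy, so this particular misstep is avoidable. Finally, your claim that in the structured case nearly all of $A'$ survives as a $B_h^{+}[1]$ set of size $N^{1-o(1)}$ is too strong even granting the structure: a two-dimensional geometric progression already supports $\gg|A'|$ nontrivial additive coincidences via scaling a single relation, so any deletion step must be preceded by random sampling and can only deliver a set of size roughly $|A'|^{1/h+\delta}$, as in the paper.
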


As previously mentioned, in contrast to Theorem \ref{th3}, we are unable to employ Lemma \ref{mu1} in the proof of Theorem \ref{th2} since Lemma \ref{mu1} only provides non-trivial estimates for sufficiently large values of $h$. Thus, we begin by combining sum-product results from \cite{So2009} along with Balog-Szemer\'{e}di-Gowers type arguments from \cite{Sch2015} to obtain a dichotomy where we either have
\begin{equation}  \label{pl3}
     r_h(A;n) =   | \{(a_1, \dots, a_h) \in A^h : a_1  + \dots + a_h = n\} |  \ll |A|^{h- 1- \nu}
\end{equation}
for every $n \in hA$, for some small constant $\nu>0$, or $A$ has a large subset with a small $h$-fold multiplicative energy. Since we eventually need to obtain a large $B_h^{+}[g]$ or $B_h^{\times}[g]$ set inside $A$, we need to amplify the bound in \eqref{pl3} by considering higher moments of the form 
\[ E_{h,k}(A) = \sum_{n\in hA} r_h(A;n)^k.\] 
These count the number of solutions to the system of simultaneous equations
\begin{equation} \label{jk3}
a_1 + \dots + a_h = a_{h+1} + \dots + a_{2h} = \dots = a_{h(k-1) + 1} + \dots + a_{hk} ,
\end{equation} 
with $a_1, \dots, a_{hk} \in A$. With suitable estimates on $E_{h,k}(A)$ in hand, we now proceed with the key step of our proof, whereupon, given $2 \leq l \leq hk-1$, we are necessitated to find strong upper bounds for the number of solutions to \eqref{jk3} when we have precisely $l$ distinct elements $a_1, \dots, a_{hk}$. This requires an amalgamation of a variety of combinatorial and linear algebraic ideas, and  forms the content of Lemma \ref{lim2}. Having proven this, we are now able to apply probabilistic techniques to extract either a large large $B_h^{+}[g]$ set or a large $B_h^{\times}[g]$ set inside $A$, for some $g \ll h$.
We further remark that we have not carefully optimised the dependency of $g$ and $\delta_h$ on $h$ in the above result, since our main aim has been to show that $g \ll h$ and $\delta_h >0 $.

It is worth remarking that in the other subcase, that is, when $h=2$, the above strategy fails at the first step since good lower bounds for $r_2(A;n)$, for some $n \in 2A$, can not be employed to extract a large subset of $A$ with small multiplicative energy. Indeed, for any finite set $X \subset \mathbb{Z}$ with $|X|=K$, one may set 
\[ A = \cup_{x \in X}  (x - A') \cup A' \ \  \text{where} \ \ A'= \{10^n : 1 \leq n \leq N\}.\]
Here, we have  $r_2(A;x) \gg |A|/K$ for every $x \in X$, while for any $B \subseteq A$ with $|B| = \alpha |A|$, one may apply Cauchy's inequality to deduce that
\[ M_{2,2}(B) := |\{(b_1,b_2,b_3,b_4) \in B^4 : b_1 b_2 = b_3 b_4\}|  \geq \frac{|B|^4}{|A^{(2)}|} \gg \alpha |B|^3  K^{-2}.  \]
Nevertheless, we record the following version of Theorem \ref{main} in the $h=2$ case, which we prove with yet another different set of techniques.

\begin{theorem} \label{th1}
Let $A \subset \mathbb{Z}$ be a finite set. Then there exist $g \leq 31$ and $\delta >0$ such that the largest $B_2^{+}[g]$ set $B$ inside $A$ and the largest $B_2^{\times}[g]$ set $C$ inside $A$ satisfy
\[ \max \{ |B|, |C| \} \gg |A|^{1/2  + \delta} . \]
\end{theorem}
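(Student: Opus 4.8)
The plan is to combine a probabilistic extraction argument, which produces large $B_2^{+}[g]$ or $B_2^{\times}[g]$ sets inside sets that are not too additively or multiplicatively \emph{popular}, with the sum-product phenomenon in the regime $h = 2$, which is what disposes of the structured case. Throughout put $n = |A|$; discarding $0$ we may assume $0 \notin A$, and we may assume $n$ is large. For a finite $X \subseteq \mathbb{Z}$ let $r^{+}_{X}(s)$ and $r^{\times}_{X}(\pi)$ denote the numbers of ordered pairs in $X^{2}$ with sum $s$ and with product $\pi$, and set $E^{+}(X) = \sum_{s} r^{+}_{X}(s)^{2}$ and $E^{\times}(X) = \sum_{\pi} r^{\times}_{X}(\pi)^{2}$; note that $X$ is a $B_2^{+}[g]$ set exactly when no value is a sum of two elements of $X$ in $g+1$ distinct unordered ways, and similarly multiplicatively. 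Fix an absolute constant $\gamma > 1/60$ (very close to $1/60$), take $g = 31$, so that $2\gamma(g-1) > 1$, and set $\delta := \tfrac{g + \gamma(g-1)}{2g+1} - \tfrac12 > 0$.

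\textbf{Step 1: extraction from unpopular sets.} The key combinatorial lemma is: if a finite set $X \subseteq \mathbb{Z} \setminus \{0\}$ satisfies $\max_{s} r^{+}_{X}(s) \le |X|^{1-\gamma}$, then $X$ contains a $B_2^{+}[g]$ set of size $\gg |X|^{1/2 + \delta}$. To prove it, call a ``$(g+1)$-fold coincidence'' the data of a value together with $g+1$ of its distinct unordered representations as a sum of two elements of $X$; since distinct unordered representations of a common value are pairwise disjoint, a coincidence is supported on exactly $2(g+1)$ elements of $X$, and $X$ is a $B_2^{+}[g]$ set if and only if it contains no coincidence. The number of coincidences is at most $\sum_{s} r^{+}_{X}(s)^{g+1} \le \big(\max_{s} r^{+}_{X}(s)\big)^{g-1} E^{+}(X) \le |X|^{(1-\gamma)(g-1)+3}$, using $E^{+}(X) \le |X|^{3}$. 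Now let $X_{p}$ keep each element of $X$ independently with probability $p \asymp |X|^{-[(1-\gamma)(g-1)+2]/(2g+1)}$; then the expected number of coincidences supported inside $X_{p}$ is at most $p^{2(g+1)} |X|^{(1-\gamma)(g-1)+3} \le \tfrac12 p|X| = \tfrac12 \mathbb{E}|X_{p}|$, so some realisation of $X_{p}$ has at least $\tfrac12 p|X|$ elements after deleting one element from each surviving coincidence, and the resulting set is a $B_2^{+}[g]$ set of size $\gg p|X| \gg |X|^{(g+\gamma(g-1))/(2g+1)} = |X|^{1/2 + \delta}$, where positivity of $\delta$ used $2\gamma(g-1) > 1$. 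The multiplicative analogue, with $r^{\times}$, $E^{\times}$ and the hypothesis $\max_{\pi} r^{\times}_{X}(\pi) \le |X|^{1-\gamma}$, is proved identically (here $0 \notin X$ is used).

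\textbf{Step 2: the dichotomy.} If $\max_{s} r^{+}_{A}(s) \le n^{1-\gamma}$, Step 1 yields a $B_2^{+}[g]$ set $B \subseteq A$ with $|B| \gg n^{1/2+\delta}$; if $\max_{\pi} r^{\times}_{A}(\pi) \le n^{1-\gamma}$, it yields such a $B_2^{\times}[g]$ set $C$. Hence we may assume that $A$ is \emph{both additively and multiplicatively popular}, i.e.\ there exist $s^{*}$ and $\pi^{*}$ with $r^{+}_{A}(s^{*}) \ge n^{1-\gamma}$ and $r^{\times}_{A}(\pi^{*}) \ge n^{1-\gamma}$; equivalently, $A$ contains a set $S$ with $S = s^{*} - S$ and $|S| \ge n^{1-\gamma}$, and a set $T$ with $T = \pi^{*}/T$ and $|T| \ge n^{1-\gamma}$.

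\textbf{Step 3: the structured case, and the main obstacle.} This is the heart of the matter and is where the sum-product philosophy enters. The point is that a set of integers cannot carry pervasive additive \emph{and} pervasive multiplicative structure: even though $A$ is popular on both sides, one expects to be able to peel off the structured part and locate a subset $A' \subseteq A$ with $|A'| \gg n^{1-o(1)}$ on which one of the two sides is unpopular --- say $\max_{s} r^{+}_{A'}(s) \le |A'|^{1-\gamma}$ or $\max_{\pi} r^{\times}_{A'}(\pi) \le |A'|^{1-\gamma}$ --- after which the appropriate case of Step 1, applied to $A'$, produces a $B_2^{+}[g]$ or $B_2^{\times}[g]$ subset of size $\gg |A'|^{1/2+\delta} \gg n^{1/2+\delta/2}$, completing the proof. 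Executing this is delicate: one cannot afford the exponential losses inherent in Freiman-type structure theory, since the doubling parameters in play are genuine powers of $n$, so the separation has to be performed directly, using incidence-theoretic estimates (such as a Rudnev-type point--plane incidence bound) together with bounds on higher additive and multiplicative energies, in the spirit of Bourgain--Chang \cite{BC2005} and the low-energy decomposition techniques of \cite{Mu2021d}. It is precisely the numerology of these incidence and higher-energy inputs that forces the admissible constant $g = 31$ and pins $\gamma$ just above $1/60$; by contrast Steps 1 and 2 are routine, being a standard deletion argument and an immediate case split. The main obstacle is therefore Step 3: converting ``$A$ is popular on both sides'' into ``a large subset of $A$ is unpopular on one side'' without passing to a subset so small that the gain in Step 1 is washed out.
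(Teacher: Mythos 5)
Your Steps 1 and 2 are correct in outline (modulo a minor issue: a coincidence containing a doubled representation $\{a,a\}$, i.e.\ $s = 2a$ with $a \in X$, is supported on only $2g+1$ elements rather than $2(g+1)$, so its survival probability under sampling is $p^{2g+1}$; one must check the numerology still closes for these, which it does after adjusting $p$, but it is worth saying). However, the entire substance of the theorem lies in Step 3, which you explicitly do not carry out, and moreover the dichotomy you set up appears too weak to support it. After Step 2 you know only that there exist a single popular sum $s^{*}$ and a single popular product $\pi^{*}$; that is far too little structure to feed into incidence geometry, since a set can contain one symmetric subset of size $n^{1-\gamma}$ (or one subset closed under $x \mapsto \pi^{*}/x$) without being additively or multiplicatively structured in any exploitable sense.

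The paper's dichotomy is different and is what makes the proof go through: it thresholds on the \emph{number} of popular sums, $|S|$ where $S = \{x \in 2A : r_{2}(A;x) \ge |A|^{1-\epsilon}\}$. If $|S| \le |A|^{\eta}$ one bounds the higher moment $E_{2,k}(A)$ by splitting into popular and unpopular sums and applies a deletion lemma (Lemma~\ref{sidr}) whose hypothesis is the moment bound $E_{2,k}(A) \ll |A|^{k+1/2-c}$ rather than a sup bound; this is strictly more flexible than your Step 1 and, crucially, tolerates a few very popular sums, which your sup-bound version cannot. If $|S| > |A|^{\eta}$ the paper extracts a large $A' \subseteq A$ consisting of elements that are popular differences against a subset $S' \subseteq S$, bounds $\sup_{n} m_{2}(A';n)$ via a new incidence estimate for translated hyperbolas (Theorem~\ref{hyp}), converts this into a bound on $M_{2,k}(A')$, and applies Lemma~\ref{sidr} again. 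That hyperbolic incidence estimate is precisely the missing ingredient you gesture at but do not supply, and it is where the numerology $k = 32$ (hence $g = 31$) and $\eta = 1802/3630$ actually originates; your assertion that $g = 31$ and $\gamma$ just above $1/60$ are ``forced by the incidence inputs'' is reverse-engineered from the known answer rather than derived.
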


We briefly mention some of the ideas involved in the proof of Theorem \ref{th1}. Here, we consider a subset $S \subseteq 2A$ such that $r_2(A;n) \geq |A|^{1 - \varepsilon}$ for all $n \in S$, for some suitable choice of $\varepsilon>0$. If $|S|<|A|^{\eta}$, for some fixed $1/3<\eta<1/2$, then we can procure good upper bounds $E_{2,k}(A)$ for reasonable values of $k$ and consequently obtain a large $B_2^{+}[g]$ set inside $A$. On the other hand, if $|S|\geq |A|^{\eta}$, then we may find a large subset $A'$ of $A$ such that for every $a' \in A'$, the set $(A+a') \cap S'$ is large, where $S' \subseteq S$ satisfies $|S'| = |A|^{\eta}$. Our aim now is show that for any $n \neq 0$, one obtains some power saving over the trivial bound for the number of solutions to $n = a_1' a_2'$ with $a_1',a_2' \in A'$, since this can then be amplified by considering the multiplicative analogue of $E_{2,k}(A)$, which, in turn, would deliver a large $B_2^{\times}[g]$ sets inside $A$ via probabilistic techniques. Moreover, since the set $(A+a') \cap S'$ is large for every $a' \in A'$, our goal can be rephrased in terms of obtaining good upper bounds for the number of solutions to
\[ (x_1 - y_1)(x_2 - y_2) = 1 \]
with $x_1,x_2$ and $y_1, y_2$ lying in suitable dilates of $A$ and $S'$ respectively. This is precisely the key step in our proof of Theorem \ref{th1}, where we employ combinatorial geometric techniques to analyse the number of incidences between a large point set and somewhat few translates of a hyperbola. We record this estimate in the form of Theorem \ref{hyp}, which can be interpreted as a quantitative Euclidean analogue of a result of Bourgain \cite{Bo2012} and would perhaps be of independent interest.

Theorem \ref{th1} quantifies the only other affirmative result in this direction, the latter being due to Shkredov \cite{Sh2021}, who showed that Theorem $\ref{th1}$ holds for some $g \leq K$, for some potentially very large value of $K>0$. Moreover, the reader may observe that Theorems $\ref{th3}, \ref{th2}$ and $\ref{th1}$ combine to deliver Theorem \ref{main}.


Considering Proposition \ref{prop: construction}, we see that even for large values of $h$, there is a big gap between the lower bounds that are provided by Theorem $\ref{th3}$ and the upper bounds presented in Proposition $\ref{prop: construction}$. This naturally leads one to the following question.

\begin{Question}
For each $h \in \mathbb{N}$, let $\Lambda_h$ be the supremum of all real numbers $\eta_h >0$ which satisfy the following statement. Any finite set $A$ of natural numbers contains either a $B_{h}^+[1]$ set or a $B_{h}^{\times}[1]$ of size at least $C_{h,\eta_h} |A|^{\eta_h}$, for some absolute constant $C_{h,\eta_h}>0$. Find $\Lambda_h$.
\end{Question}

In particular, Theorem $\ref{th3}$ and Proposition $\ref{prop: construction}$ combine to imply the bound 
\[ h^{-1}(\log \log h)^{1/2 - o(1)} \ll \Lambda_h \leq 1/2 + 1/(2h+2) \]
for even values of $h$, and it would be interesting to know whether $\Lambda_h \to 0$ as $h \to \infty$. Even for specific choices of sets $A$, this question exhibits close connections to deep conjectures in extremal graph theory concerning bipartite versions of the Tur\'{a}n function for forbidden even cycles, see \S4 for further details.
\par

We finish this section by providing a brief outline of our paper. We use \S2 to present some further remarks surrounding the problem of finding large additive and multiplicative Sidon sets in arbitrary sets of integers. 
In \S3, we state various preliminary definitions and lemmata that we will frequently use throughout our paper.
We utilise \S4 to present two proofs of Proposition \ref{prop: construction} using graph-theoretic ideas.  Next, we employ \S5 to study estimates on number of solutions to systems of equations with repetitive variables, which we will then use along with probabilistic methods in \S6 to prove that sets with low additive or multiplicative energies contain large additive or multiplicative Sidon sets. In \S7, we prove some incidence estimates that we require in the proof of Theorem $\ref{th1}$. We conclude our paper by recording the proofs of Theorems \ref{th3}, \ref{th2} and \ref{th1} in \S8.

\textbf{Notation.} In this paper, we use Vinogradov notation, that is, we write $X \gg_{z} Y$, or equivalently $Y \ll_{z} X$, to mean $|X| \geq C_{z} |Y|$ where $C$ is some positive constant depending on the parameter $z$.  We use $e(\theta)$ to denote $e^{2\pi i \theta}$ for every $\theta \in \mathbb{R}$. Moreover, for every natural number $k$ and for every non-empty, finite set $Z$, we use $|Z|$ to denote the cardinality of $Z$, and we write $Z^k = \{ (z_1, \dots, z_k)  \ |  \ z_1, \dots, z_k \in Z\}$. For every natural number $n \geq 2$, we use boldface to denote vectors $\vec{x} = (x_1, x_2, \dots, x_n) \in \mathbb{R}^n$ and we write $\vec{x}^T$ to be the transpose of $\vec{x}$.

\textbf{Acknowledgements.} 
The authors would like to thank Ben Green for pointing to this problem as well as for various helpful discussions. The second author would like to thank David Ellis and Misha Rudnev for useful comments. The authors are grateful to the anonymous referee for many helpful comments.


\section{Further remarks}

We employ this section to highlight the various connections and differences between the problem of finding large additive and multiplicative Sidon sets and some of the other modern results towards the sum-product conjecture. We begin by defining the so-called \emph{low energy decompositions}, a concept which was introduced first by Balog--Wooley \cite{BW2017}. Thus, for any finite set $A \subset \mathbb{R}$ and any $s \in \mathbb{N}$, we write
\[ E_{s,2}(A) = \sum_{a_1, \dots,a_{2s} \in A} \mathds{1}_{a_1 + \dots + a_s = a_{s+1} + \dots + a_{2s}} \ \ \text{and}  \ \ M_{s,2}(A) = \sum_{a_1, \dots,a_{2s} \in A} \mathds{1}_{a_1  \dots  a_s = a_{s+1}  \dots a_{2s}}.  \]
By a simple application of Cauchy's inequality, one sees that
\begin{equation} \label{csstandard}
 |sA| E_{s,2}(A) \geq |A|^{2s} \ \ \text{and} \ \ |A^{(s)}| M_{s,2}(A) \geq |A|^{2s}. \end{equation}
Hence, generalising sum-product type inequalities, Balog--Wooley showed that any finite $A \subset \mathbb{Z}$ may be written as $A = X \cup Y$ with $X,Y$ disjoint such that
\[ E_{s,2}(X) , M_{s,2}(Y) \ll |A|^{2s - 1 - \delta}, \]
with $\delta = 2/33 - o(1)$. Noting \eqref{bces4}, they further speculated that one should be able to have $\delta \to \infty$ as $s \to \infty$. This was confirmed by the second author \cite{Mu2021} in the following quantitative form.

\begin{lemma} \label{mu1}
Let $s$ be a natural number and let $A$ be a finite set of integers. Then $A$ may be written as $A = X \cup Y$ for disjoint sets $X,Y$ such that
\[ E_{s,2}(X) \ll_{s} |X|^{2s - \sigma_s} \ \text{and} \ M_{s,2}(Y) \ll_{s} |Y|^{2s - \sigma_s}, \]
where $\sigma_s \geq D (\log \log s)^{1/2} (\log \log \log s)^{-1/2},$ for some absolute constant $D >0$. 
\end{lemma}

Firstly, this delivers sum-product estimates akin to the work of Bourgain--Chang \cite{BC2005} in a straightforward manner. In particular, applying \eqref{csstandard}, we find that
\begin{align*}
    |sA| + |A^{(s)}| & \geq |sX| + |Y^{(s)}| \geq |X|^{2s}E_{s,2}(X)^{-1} + |Y|^{2s} M_{s,2}(Y)^{-1} \gg_{s} |X|^{\sigma_s} + |Y|^{\sigma_s} \gg_{s} |A|^{\sigma_s},
\end{align*}
where $\sigma_s \gg (\log \log s)^{1/2 - o(1)}$. Moreover, we will later combine this with probabilistic methods, see Lemmata \ref{gens} and \ref{mu2}, to obtain large $B_s^{+}[1]$ and $B_s^{\times}[1]$ sets inside arbitrary sets, thus proving Theorem \ref{th3}. Upon optimising our random sampling and deletion argument, as in the above lemmata, one can show that whenever $M_{s,2}(Y) \ll_s |Y|^{2s - \sigma_s}$ for any $s\geq 2$ and $\sigma_s >0$, there exists a large $B_s^{\times}[1]$ set $C \subseteq Y$ with $|C| \gg_s |Y|^{\frac{\sigma_s}{2s-1}}$. One might wonder whether this can be improved to $|C| \gg_s |Y|^{\frac{\sigma_s}{s}}$, since in the cases when $Y = \{2,4,\dots, 2^N\}$ and $Y = [N]$, the latter estimate exhibits the correct order. Perhaps surprisingly, one can show that the random sampling and deletion argument is sharp when $s=2$, by considering the set $A$ described in \eqref{sur67}. Indeed, this set $A$ satisfies $M_{2,2}(A) \ll |A|^{2}$ while the largest $B_2^{\times}[1]$ subset $C$ of $A$ satisfies $|C| \ll |A|^{2/3}$. On the other hand, the question concerning the necessity of the quantitative loss in exponent that occurs via random sampling and deletion type arguments when $s \geq 3$ seems like a hard problem, which is very closely related to finding sharp estimates for the Tur\'{a}n function for forbidden even cycles in bipartite graphs. 


The converse direction also holds, that is, one can use results about Sidon sets to obtain low-energy decompositions, and here, there seems to be no such losses. For instance, via elementary arguments as in \cite[Proposition 9.2]{Mu2023}, one can show that Theorem \ref{th3} implies that any finite set $A \subset \mathbb{Z}$ may be written as $A = X \cup Y$, with $X,Y$ disjoint, such that
\[ E_{s,2}(X), M_{s,2}(Y) \ll_s |A|^{2s - \eta_s}, \]
This deduction is quantitatively sharp, as can be checked by noting the case when $A = [N]$. 
\par

We finally explain, in further detail, the remark following Proposition \ref{prop: construction}. Suppose we have a result that implies that for some fixed integer $h_0 \geq 2$ and some fixed $\delta_0 >0$, one has $\max\{|h_0 A|, |A^{(h_0)}|\} \gg |A|^{1 + \delta_0}$ for every finite set $A \subset \mathbb{Z}$. Then by employing Lemma \ref{prineq}, we see that
\begin{align*}
 \max\{|2A|, |A^{(2)}| \}  \geq |A| \max \{ (|h_0 A| / |A|)^{1/h_0}, (|A^{h_0}| / |A|)^{1/h_0} \}   \gg |A|^{1 + \delta_0/h_0},
\end{align*} 
whence, 
\[ \max\{|sA|, |A^{(s)}| \} \geq    \max\{|2A|, |A^{(2)}| \}  \geq    |A|^{1 + \delta_0/h_0}  \]
holds for every $s \geq 2$ and every finite $A \subset \mathbb{Z}$. On the other hand, there does not seem to be an analogue of the above manoeuvre in the setting of finding non-trivially large $B_h^{+}[1]$ and $B_h^{\times}[1]$ sets. Indeed, suppose that one could prove the most optimal version of Theorem \ref{th1} with $g=1$ and $\delta = 1/6$. Applying \cite[Proposition 9.2]{Mu2023}, we now have that any finite set $A \subset \mathbb{Z}$ may be written as $A = X \cup Y$, with $X,Y$ disjoint, such that
$E_{2,2}(X), M_{2,2}(Y) \ll |A|^{3 - 1/3}.$ Applying Lemma \ref{awk}, we get that $E_{s,2}(X), M_{s,2}(Y) \ll |A|^{2s - 1 - 1/3}$ for every $s \geq 2$. These estimates, when combined with the above random sampling and deletion type arguments, will provide either a $B_s^{+}[1]$ set or a $B_s^{\times}[1]$ set inside $A$ of size around $|A|^{\frac{4}{6s - 3}}$, which is significantly smaller than the ``trivial" lower bound $|A|^{\frac{1}{s}}$ that one gets from applying results from \cite{KSS1975}.


\section{Preliminary Lemmata}

Let $s,k$ be natural numbers and let $A$ be some finite, non-empty set of real numbers. For each $n \in \mathbb{R}$, we denote
\[ r_{s}(A;n) = \{ (a_1, \dots, a_s) \in A^s \ | \ a_1 + \dots + a_s = n\} \]
and
 \[ m_{s}(A;n) = \{ (a_1, \dots, a_s) \in A^s \ | \ a_1  \dots  a_s = n\} . \]
These have a natural connection to counting solutions to additive and multiplicative equations, and in particular, writing $E_{s,k}(A) = \sum_{n \in sA} r_{s}(A;n)^k,$ we see that $E_{s,k}(A)$ counts the number of solutions to the system of equations
\[ a_1 + \dots + a_s = a_{s+1} + \dots + a_{2s} = \dots = a_{(k-1)s + 1} + \dots + a_{ks}, \]
with $a_1, \dots, a_{ks} \in A$. Similarly, we define $M_{s,k}(A) = \sum_{n \in A^{(s)}} m_{s}(A;n)^k,$ wherein, we note that $M_{s,k}(A)$ counts the number of solutions to the system of equations
\[ a_1 \dots a_s =  a_{s+1}  \dots  a_{2s} = \dots = a_{(k-1)s + 1}  \dots  a_{ks}, \]
with $a_1, \dots, a_{ks} \in A$. 
\par

It is worth noting some straightforward properties of the representation function $r_{s}(A; \cdot)$ and its various moments. In particular, we have
\[ \sup_{n \in sA} r_{s}(A;n) \leq |A|^{s-1} \ \text{and} \ \sum_{n \in sA} r_{s}(A;n) = |A|^s, \]
whence, 
\[ E_{s,k}(A) \leq (\sup_{n \in sA} r_{s}(A;n) )^{k-1} \sum_{n \in sA} r_{s}(A;n) \leq  |A|^{sk - k + 1}. \]
There are some stronger inequalities that one can obtain between these quantities, and we record some of these as presented in \cite[Lemmata $3.1$ and $3.2$]{Mu2021d}.

\begin{lemma} \label{awk}
Let $s, l, k$ be natural numbers such that $l <s$ and let $A \subset (0, \infty)$ be a finite set. Then
\[ E_{s,2}(A) \leq |A|^{2s - 2l} E_{l,2}(A), \ \text{and} \ M_{s,2}(A) \leq |A|^{2s - 2l} M_{2,2}(A) . \]
Similarly, for all finite sets $A_1, \dots, A_{2s} \subset (0, \infty)$, we have 
\[ \sum_{a_1 \in A_1, \dots, a_{2s} \in A_{2s}} \mathds{1}_{a_1 + \dots + a_s = a_{s+1} + \dots + a_{2s}} \leq E_{s,2}(A_1)^{1/2s} \dots E_{s,2}(A_{2s})^{1/2s}. \]
Finally, when $s$ is even, we have
\[     \sup_{n \in sA} r_{s}(A; n) \leq E_{s/2,2}(A). \]
\end{lemma}

As previously mentioned, our proof of Theorem $\ref{th2}$ will employ various tools from arithmetic combinatorics, the foremost being the following inequality proven by Solymosi \cite{So2009}. 

\begin{lemma} \label{so1}
Let $A \subset (0 , \infty)$ be a finite set such that $|A| \geq 2$. Then 
\[ M_{2,2}(A) \ll |A+A|^2 \log |A|. \]
\end{lemma}

Our next tool of choice will be the Balog--Szemer\'edi--Gowers theorem, as presented in \cite{Sch2015}.

\begin{lemma} \label{bsg5}
Let $A$ be a finite set of real numbers and let $K \geq 1$ be a real number. If $E_{2,2}(A) \geq |A|^{3}/K$, then there exists $A' \subseteq A$ such that $|A'| \gg |A| / K$ and
\[ |A' - A'| \ll K^4 |A'| . \]
\end{lemma}

We will also use the Pl{\"u}nnecke--Ruzsa theorem \cite{Pe2012} to convert the above conclusion concerning difference sets to estimates on sumsets, and so, we record this below.

\begin{lemma} \label{prineq}
Let $A, B$ be finite subsets of some additive abelian group $G$. If $|A+B| \leq K|A|$, then for all non-negative integers $m,n$, we have
$$ |mB - nB| \leq K^{m+n}|A|.$$
\end{lemma}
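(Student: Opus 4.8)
The plan is to give the now-standard graph-free proof of the Pl\"unnecke--Ruzsa inequality due to Petridis. The two ingredients are Ruzsa's triangle inequality and a subset-selection trick.

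First I would record Ruzsa's triangle inequality in sum form: for any finite subsets $U, V, W$ of $G$,
\[ |V - W| \cdot |U| \leq |U + V| \cdot |U + W| . \]
This is proved by exhibiting an injection $(V - W) \times U \to (U+V) \times (U+W)$: for each $d \in V - W$ fix a representation $d = v(d) - w(d)$ with $v(d) \in V$ and $w(d) \in W$, and send $(d, u)$ to $(u + v(d), u + w(d))$; injectivity is immediate, since the difference of the two coordinates recovers $d$, hence $v(d)$ and $w(d)$, and then $u$.

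The heart of the matter is the following lemma of Petridis. Among all non-empty $X \subseteq A$, pick one minimising the ratio $K' := |X + B|/|X|$; since $A$ itself is a candidate, $K' \leq K$. I claim that $|X + B + C| \leq K'|X + C|$ for every finite $C \subseteq G$, and I would prove this by induction on $|C|$, the case $|C| = 1$ being the definition of $K'$. For the inductive step write $C = C' \cup \{c\}$ with $c \notin C'$ and put $X' = \{ x \in X : x + B + c \subseteq X + B + C' \}$. From $X + B + C = (X + B + C') \cup (X + B + c)$ together with $X' + B + c \subseteq X + B + C'$ one obtains
\[ |X + B + C| \leq |X + B + C'| + |X + B| - |X' + B| , \]
and from $X + C = (X + C') \cup (X + c)$ together with the implication $x + c \in X + C' \Rightarrow x \in X'$ (valid since $x + c = x' + c'$ with $x' \in X$, $c' \in C'$ forces $x + B + c = x' + B + c' \subseteq X + B + C'$) one obtains $|X + C| \geq |X + C'| + |X| - |X'|$. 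Minimality of $K'$ gives $|X' + B| \geq K'|X'|$ (trivially if $X' = \emptyset$), the induction hypothesis gives $|X + B + C'| \leq K'|X + C'|$, and $|X + B| = K'|X|$; combining these three facts with the two displayed inequalities yields $|X + B + C| \leq K'(|X + C'| + |X| - |X'|) \leq K'|X + C|$, closing the induction.

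Finally I would iterate: taking $C = (n-1)B$ in Petridis's lemma and using $B + (n-1)B = nB$ gives $|X + nB| \leq K'|X + (n-1)B|$, hence $|X + nB| \leq K'^{\,n}|X|$ for all $n \geq 0$ by induction on $n$. Applying the triangle inequality above with $U = X$, $V = mB$ and $W = nB$ then gives
\[ |mB - nB| \cdot |X| \leq |X + mB| \cdot |X + nB| \leq K'^{\,m}|X| \cdot K'^{\,n}|X| , \]
so that $|mB - nB| \leq K'^{\,m+n}|X| \leq K^{m+n}|A|$, as required. The only genuinely delicate step is the induction in Petridis's lemma — specifically, pinning down the auxiliary set $X'$ and verifying the two set inclusions ($X' + B + c \subseteq X + B + C'$ by definition, and $x \notin X' \Rightarrow x + c \notin X + C'$) that drive the two inclusion--exclusion counts; everything else is bookkeeping.
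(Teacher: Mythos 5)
The paper does not prove this lemma; it is the classical Pl\"unnecke--Ruzsa inequality, which is simply cited as a known tool from the literature (see, e.g., Petridis or Ruzsa). Your proof is a complete and correct rendition of Petridis's now-standard argument: choose a nonempty $X \subseteq A$ minimising $K' = |X+B|/|X|$, establish $|X+B+C| \leq K'|X+C|$ by induction on $|C|$ via the two inclusion--exclusion counts and the minimality of $K'$, iterate with $C = (n-1)B$ to obtain $|X+nB| \leq K'^{\,n}|X|$, and close with the Ruzsa triangle inequality to get $|mB-nB| \leq K'^{\,m+n}|X| \leq K^{m+n}|A|$. Every step checks out, including the subtler verifications that $X'+B+c \subseteq X+B+C'$ by definition of $X'$ and that $x+c \in X+C'$ forces $x \in X'$, so there is nothing to add.
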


We will also be utilising incidence geometric techniques in the proof of Theorem $\ref{th1}$ and in order to present these, we introduce some further notation. Thus, given $\vec{u} \in \mathbb{R}^3$, we define the M\"{o}bius transformation $M_{\vec{u}}$ to be
\[ M_{\vec{u}}(x) = \frac{u_1 x + u_2}{x + u_3}. \]
A lot of recent works in incidence theory have focused on studying incidences between a set of M\"{o}bius transformations of the above form and sets of points in $\mathbb{R}^2$. In particular, given a finite set $X \subset \mathbb{R}$ and a finite set $H \subset \mathbb{R}^3$ satisfying $u_2 \neq u_1 u_3$, for each $\vec{u} \in H$, we define
\[ I(X\times X,H) = \sum_{\vec{u} \in H} \sum_{(x_1, x_2) \in X^2} \mathds{1}_{x_2 = M_{\vec{u}}(x_1)}, \] 
whereupon, one may infer from the discussion surrounding \cite[inequality $(8)$]{SS2016} that 
\[ I(X\times X,H) \ll |X|^{4/3} |H|^{2/3} + |X|^{12/11} |H|^{9/11} \log |X| + |X|^2 + |H|. \]
Combining this with \cite[Lemma 3.3]{Mu2021} enables us to present a weighted version of the above result. 
\par

\begin{lemma} \label{wtin}
Let $X \subset \mathbb{R}$ be a finite, non-empty set, and let $H \subset \mathbb{R}^3$ be a finite set such that $u_2 \neq u_1 u_3$, for each $\vec{u} \in H$, and let $w: H \to \mathbb{N}$ be a function. Then
\begin{align*}
\sum_{x_1, x_2 \in X} \sum_{\vec{u} \in H} \mathds{1}_{x_2 = M_{\vec{u}}(x_1)} w(\vec{u}) 
 \ll & \ |X|^{4/3} \big(\sum_{\vec{u} \in H} w(\vec{u})^2 \big)^{1/3} \big(\sum_{\vec{u} \in H} w(\vec{u}) \big)^{1/3} \ + \ \sup_{\vec{u} \in H} w(\vec{u}) |X|^2 \\
& +  |X|^{12/11} \big(\sum_{\vec{u} \in H} w(\vec{u})^2\big)^{2/11} \big(\sum_{\vec{u} \in H} w(\vec{u}) \big)^{7/11} \log |X| \ +  \sum_{\vec{u} \in H} w(\vec{u}).
\end{align*}
\end{lemma}

Finally, we will use a result from extremal graph theory to prove Proposition~\ref{prop: construction}. To state the result, we will first introduce some standard graph theoretic definitions. 
Thus, given a graph $G$, we will use $V(G)$ and $E(G)$ to denote the vertex set of $G$ and the set of edges of $G$ respectively. 
 Given a bipartite graph $H$ and integers $m,n$, the asymmetric bipartite Tur\'an number $\mathrm{ex}(m,n,H)$ of $H$ denotes the maximum number of edges in an $m$ by $n$ bipartite graph that does not contain $H$ as a subgraph. For our purposes, we will set $H = C_{2h}$ for some $h \in \mathbb{N}$, where $C_{2h}$ denotes a $2h$-cycle, that is, $V(C_{2h}) = \{v_1, \dots, v_{2h}\}$ and $E(C_{2h}) = \{ (v_1, v_2), (v_2, v_3) , \dots, (v_{2h}, v_1) \}$. We now record a result of Naor and Verstra\"{e}te \cite{NV05} on bounds for $\mathrm{ex}(m,n,H)$.

\begin{lemma}\label{lem: cycle free}
For $m\leq n$ and $h\geq 2$, we have that
\[
\mathrm{ex}(m,n,C_{2h})\leq 
\begin{cases}
(2h-3)((mn)^{\frac{h+1}{2h}}+m+n)\quad&\text{ if } h \text{ is odd};\\
(2h-3)(m^{\frac{h+2}{2h}}n^{\frac12}+m+n)\quad&\text{ if } h \text{ is even}.
\end{cases}
\]
\end{lemma}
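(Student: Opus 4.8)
This is the bound of Naor and Verstra\"ete, and I would reprove it by the standard breadth-first-search machinery for even-cycle Tur\'an problems, adapted to the bipartite asymmetric setting. Fix $m\le n$, and let $G$ be a bipartite $C_{2h}$-free graph with parts $X,Y$, $|X|\le m$, $|Y|\le n$, attaining $e(G)=\mathrm{ex}(m,n,C_{2h})$. I would first pass to the dense part of $G$: a routine cleaning shows that either $e(G)\lesssim_h m+n$ (in which case the $m+n$ term in the statement already gives the bound), or there is a nonempty bipartite $C_{2h}$-free subgraph $G'$ in which every vertex of $X$ has degree at least some $d_X\gtrsim e(G)/m$ and every vertex of $Y$ has degree at least some $d_Y\gtrsim e(G)/n$; note $d_X\ge d_Y$ since $m\le n$, and $d_Xd_Y\gtrsim e(G)^2/(mn)$, so it suffices to bound $d_Xd_Y$ from above.

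Next I would run a breadth-first search in $G'$ from a vertex $v\in X$, with levels $L_0=\{v\},L_1,L_2,\dots$ and $\ell_i=|L_i|$. Since $G'$ is bipartite, every edge joins consecutive levels, the even-indexed levels lie in $X$ and the odd-indexed ones in $Y$; in particular $\sum_{i\le h,\ i\text{ even}}\ell_i\le m$ and $\sum_{i\le h,\ i\text{ odd}}\ell_i\le n$. The key point is an expansion estimate, namely that for every $i\le h-1$ one has $\ell_{i+1}\gtrsim_h d_X\ell_i$ when $L_i\subseteq X$ and $\ell_{i+1}\gtrsim_h d_Y\ell_i$ when $L_i\subseteq Y$, whence $\ell_{2j}\gtrsim_h (d_Xd_Y)^j$ and $\ell_{2j+1}\gtrsim_h (d_Xd_Y)^jd_X$ for all $2j,2j+1\le h$. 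Combining this with the two level-sum inequalities gives, for $h=2k$ even, the bound $(d_Xd_Y)^k\lesssim_h m$, which together with $d_Xd_Y\gtrsim e(G)^2/(mn)$ yields $e(G)\lesssim_h m^{(h+2)/(2h)}n^{1/2}$; and for $h=2k+1$ odd it gives $(d_Xd_Y)^kd_X\lesssim_h n$, which together with $d_Xd_Y\gtrsim e(G)^2/(mn)$ and $d_X\gtrsim e(G)/m$ yields $e(G)\lesssim_h (mn)^{(h+1)/(2h)}$. The dichotomy in the statement is thus precisely the parity phenomenon that, with the root in $X$, the deepest level $L_h$ lies in the smaller part $X$ exactly when $h$ is even, which is why $m$ carries the extra exponent in that case; the constant $2h-3$ emerges from the cleaning together with the $O(h)$-type losses in the expansion estimate.

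The step I expect to be the main obstacle is the expansion estimate $\ell_{i+1}\gtrsim_h d\ell_i$, for the familiar reason that forbidding only the single cycle length $2h$ does not by itself rule out shorter even cycles, which could make a BFS level collapse. Handling this requires a Bondy--Simonovits-type rerouting argument: one records, for each vertex, the chain of its ancestors in the BFS tree, and shows that if $\ell_{i+1}$ were much smaller than $d\ell_i$ for some $i<h$ then some vertex near depth $h$ would have two neighbours whose tree-paths to $v$ are internally disjoint, producing two internally disjoint paths of length $h$ with common endpoints --- i.e. a $C_{2h}$, which is forbidden. Making this rerouting clean enough to extract a genuine lower bound $\ell_{i+1}\gtrsim_h d\ell_i$ with only an $O(h)$ loss --- rather than the weaker expansion a naive argument produces --- and checking that it runs simultaneously along the $X$-chain and the $Y$-chain of levels, is where essentially all the work lies; the cleaning step and the final arithmetic are routine by comparison.
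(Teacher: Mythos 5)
The paper states this as a cited result of Naor and Verstra\"ete \cite{NV05} and does not prove it, so there is no internal argument to compare against; I will assess the sketch on its own.

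Your framework --- clean to a bipartite subgraph with minimum degrees $d_X\gtrsim e(G)/m$ on $X$ and $d_Y\gtrsim e(G)/n$ on $Y$, run BFS from a vertex of the smaller part, force level-by-level expansion, and read off the bound from whether level $h$ sits in $X$ ($h$ even, size $\le m$) or in $Y$ ($h$ odd, size $\le n$) --- is indeed the Bondy--Simonovits framework that Naor and Verstra\"ete adapt, and the arithmetic in your second paragraph correctly recovers both exponents and the parity dichotomy. The plan is sound, and you have correctly identified where the work lies.

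The gap is that the expansion estimate $\ell_{i+1}\gtrsim_h d\,\ell_i$ is essentially the whole theorem, and the mechanism you sketch for it does not work as stated. A vertex near depth $h$ with two neighbours whose tree-paths to $v$ are internally disjoint yields a $C_{2h}$ only if that vertex sits at depth \emph{exactly} $h$; at depth $i+1<h$ it yields a $C_{2(i+1)}$, which forbidding $C_{2h}$ alone does not rule out. You also give no mechanism by which failure of expansion at a low level propagates to such a depth-$h$ configuration. The actual Bondy--Simonovits argument is \emph{local}: for each $i\le h-1$ one shows the bipartite slice between $L_i$ and $L_{i+1}$ cannot carry more than roughly $(2h-3)(\ell_i+\ell_{i+1})$ edges, because a denser slice would contain a theta-type configuration whose branches, spliced with tree-paths from its endpoints back to a common \emph{ancestor} inside the BFS tree (not all the way to $v$), can be routed into a cycle of length exactly $2h$ even in the presence of shorter even cycles. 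Combining these slice bounds with the minimum-degree lower bounds is what yields the expansion, and it is also where the constant $2h-3$ actually originates --- not from generic cleaning losses. That slice lemma is the substance of the proof; as written, the proposal gestures at it but contains no argument in its place.
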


\section{Upper bound constructions}
In this section we will prove Proposition~\ref{prop: construction} via two different constructions, the first of these we present below.

\begin{proof}[Proof of Proposition~\ref{prop: construction}]
We first consider the case when $h$ is even. Let $P$ be a set consisting of the first  $N^{\frac{h}{2h+2}}$ primes, and let $Q$ be a set consisting of the next  $N^{\frac{h+2}{2h+2}}$ primes, and so, $P\cap Q=\emptyset$.
Set
\begin{equation} \label{sur67}
    A:=\{pq\mid p\in P, q\in Q\}. 
\end{equation}
Then $|A|\gg N$, and by way of the Prime number theorem, we have $a\ll N (\log N)^2$ for each $a \in A$.  We first estimate the size of the largest $B_h^+[1]$ subset $B$ of $A$, whereupon, it suffices to note that
\[ |B|^h \ll_h |hB| \leq |hA| \ll_h N (\log N)^2 \]
to prove the required bound.

Next, suppose that $C\subseteq A$ is a $B_h^\times[1]$ set. We construct a bipartite graph $G$ with $V(G)=P\cup Q$, such that given $p \in P$ and $q \in Q$, we have $(p,q) \in E(G)$ if $pq \in C$. Note that for every $h$ distinct elements $p_1, \dots, p_h\in P$ and every $h$ distinct elements $q_1,\dots,q_h\in Q$, the following set
\[
\{p_1q_1, p_2q_2, \dots, p_hq_h, p_1q_2, p_2q_3, \dots, p_{h-1}q_h, p_hq_1\}
\]
is not contained in $C$, since the product of the first $h$ elements is equal to the product of the last $h$ elements in the above set. This implies that our graph $G$ is $C_{2h}$-free, and so, we may apply Lemma~\ref{lem: cycle free} to deduce that the number of edges $|E(G)|$ of our graph satisfies
\begin{align*}
|E(G)|\leq \mathrm{ex}(|P|,|Q|,C_{2h})\ll_h N^{\frac{h+2}{2h+2}}= N^{\frac12+\frac{1}{2h+2}}.
\end{align*}
The desired bound then follows from noting that $|C| \leq |E(G)|$, which holds true because each element in $C$ has a unique representation as a product of two primes.
\par

Finally, the case when $h$ is odd follows the fact that a $B_{s}[1]$ set is also a $B_{s-1}[1]$ set for every $s \geq 3$.
\end{proof}

More generally, in the case when $A = \{ pq \ | \ p \in P, q \in Q\}$ for some suitable sets $P$ and $Q$ of primes, the problem of estimating the size of the largest $B_h^{+}[1]$ and $B_h^{\times}[1]$ sets in $A$ is equivalent to finding the largest bipartite graph on vertex sets $P$ and $Q$ without cycles of length $\leq 2h$, which in turn is closely connected to finding sharp estimates for $\mathrm{ex}(|P|,|Q|, C_{2h})$. Furthermore, for large values of $h$, finding optimal lower bounds for the latter function for different regimes of $|P|, |Q|$ is a major open problem; we refer the reader to \cite[Section 4]{FS13} and the references therein for a survey of this topic. 

Instead of using prime numbers, one can obtain similar results using powers of $2$, and we briefly sketch this as follows. 

\begin{proof}[An alternative proof of Proposition $\ref{prop: construction}$]
Let $h,n,M,N$ be even natural numbers such that $N= n^{h}$ and $M= n^{h+2}$. Moreover,  let $P_h = \{1,2, \dots, 2^N\}$ and $Q_{h} = \{2^{N+1}, \dots, 2^{N+M}\}$ be geometric progressions and let $A_h = P_h + Q_h$. Note that $|A_h| \gg_h |P_h||Q_h| \gg_h n^{2h+2}.$ 
\par

Given a $B_h^+[1]$ set $B \subseteq A$, we may construct a bipartite graph $G$ on $P_h \times Q_h$ by letting $(p,q)\in E(G)$ if $p+q \in B$. This implies that $G$ must be $C_{2h}$-free,
whence, we may apply Lemma \ref{lem: cycle free} to deduce that
\[ |B| \ll_h |P_h|^{\frac{h+2}{2h}} |Q_h|^{1/2} + |Q_h| \ll_h n^{h+2} \ll_h |A|^{\frac{h+2}{2h+2}}. \]
\par

Similarly, let $C$ be a $B_{h}^{\times}[1]$ set in $A$. Here, we note that $A \subseteq P_h \cdot Q'_h,$ where $Q'_h = \{1 + 2^j \ | \ 1 \leq j \leq N+M\}$. We may now construct a bipartite graph $G'$ on $P_h \times Q_h'$ by letting $(p',q') \in G$ if $p'q' \in C$. As before, we may observe that the graph $G'$ is $C_{2h}$-free, whenceforth, Lemma \ref{lem: cycle free} delivers the bound
\[ |C|  \ll_h |P_h|^{\frac{h+2}{2h}} |Q'_h|^{1/2} + |Q'_h| \ll_h n^{h+2} \ll_h |A|^{\frac{h+2}{2h+2}}. \]

Finally, the case when $h$ is odd follows trivially from the case when $h$ is even.
\end{proof}

We note that the set $A_2$ was recorded in work of Erd\H{o}s \cite[page $57$]{Er1983}, who used this set to prove a related conjecture on the size of the largest $B_{2}^+[1]$ set contained in a $B_{2}^+[2]$ set, and subsequently, Shkredov \cite{Sh2021} proved that the set $A_2$ also refutes the aforementioned conjecture of Klurman--Pohoata.

We now outline a construction of Balog--Wooley \cite{BW2017}, which was later modified by Roche-Newton to show that there exist sufficiently large subsets $A$ of $\mathbb{N}$ such that the largest $B_{2}^+[1]$ and $B_{2}^\times[1]$ sets in $A$ have size at most $O(|A|^{3/4})$. Thus, letting $h \geq 2$ and 
\[ A_{M,N} = \{ (2i+1)2^j \ | \ 1 \leq i \leq M \ \text{and} \ 1 \leq j \leq N \} , \]
we will show that the largest $B_{h}^+[g]$ and $B^\times_h[g]$ subsets of $A_{N,N}$ have size at most $O_{g,h}(N^{\frac{h+1}{h}})$. A straightforward application of pigeonhole principle allows us to deduce that any subset $B \subseteq A_{N,N}$ satisfying $|B| \geq 2gh!hN^{\frac{h+1}{h}}$ contains at least $gh!hN^{1/h}$ elements of $2^{j+1} \cdot [N] + 2^j$ for some $j \in \mathbb{N}$, and so, $B$ can not be a $B_{h}^+[g]$ set due to the fact that $\eqref{erds2}$ holds true. On the other hand, any $B_h^{\times}[g]$ set $C \subseteq A_{N,N}$ satisfies
\[ |C|^h \leq g h! |A_{N,N}^{(h)}| \leq gh!h N^{h+1}, \]
and so, we are done.
\par

It was noted by Balog--Wooley \cite{BW2017} that sets of the form $A_{M,N}$ restrict how good a power saving one can obtain in results akin to Lemma $\ref{mu1}$. More specifically, they showed that any subset $B \subseteq A_{N^2,N}$ with $|B| \geq N^3/2$ satisfies
\[ E_{s,2}(B) \gg_{s} |B|^{s + (s-1)/3} \ \text{and} \ M_{s,2}(B) \gg_{s} |B|^{s + (s-1)/3}. \]


\section{Solving simultaneous linear equations with repetitive elements}

Our main aim in this section is to estimate the number of solutions to a simultaneous system of equations where there are restrictions on the number of distinct elements in each solution. We begin this endeavour by presenting some further notation, and thus, for any $l, k,s \in \mathbb{N}$ satisfying $1 \leq l \leq ks$ and for any finite, non-empty set $A$ of real numbers, we denote the vector 
\[ (a_{1,1}, \dots, a_{1,s}, a_{2,1}, \dots, a_{2,s}, \dots, a_{k,1}, \dots, a_{k,s}) \in A^{ks} \]
 to be \emph{$(k,l)$-complex} if there are precisely $l$ distinct values in the set $\{ a_{1,1}, \dots, a_{k,s}\}$ and if for any $1 \leq i < j \leq k$, we have that $\{a_{i,1}, \dots, a_{i,s} \} \neq \{a_{j,1} , \dots, a_{j,s}\}$. Moreover, we use $W_{k,l}$ to denote the set of all $(k,l)$-complex vectors in $A^{ks}$. Next, let $\Sigma_{l,s,k}(A)$ count the number of solutions to the system of equations 
\[
a_{1,1}+\cdots+a_{1,s}=a_{2,1}+\cdots+a_{2,s} = \cdots=a_{k,1}+\cdots+a_{k,s}, 
\]
where $(a_{1,1}, \dots, a_{k,s}) \in W_{k,l}$ such that for any $1 \leq i < j \leq k$, we have that $\{a_{i,1}, \dots, a_{i,s} \} \neq \{a_{j,1} , \dots, a_{j,s}\}$. The main task of the section is to estimate $\Sigma_{l,s,k}(A)$ under the assumption that $E_{s,k}(A)$ is bounded. We note that the above system may be rewritten as the following system of $k-1$ simultaneous linear equations 
\begin{equation} \label{alt}
a_{i,1} + \dots + a_{i,s} -  a_{k,1} - \dots - a_{k,s} = 0 \ \ \ (1 \leq i \leq k -1).
\end{equation}
We will often write $E_i = a_{i,1} + \dots + a_{i,s}$ for each $1 \leq i \leq k$.  
\par

The next two lemmata provide estimations of $\Sigma_{l,s,k}(A)$ when either $k=2$ or $s=2$. 
 \begin{lemma}\label{lem: sigma l,s,2}
Let $s,l$ be natural numbers such that $2 \leq l \leq 2s$. Moreover suppose that $A$ is a finite set of real numbers such that
\[ E_{s,2}(A) \ll_{s} |A|^{2s - 2 + 1/s - c}, \]
for some $c>0$. Then we have that
\[ \Sigma_{l,s,2}(A) \ll_{s} |A|^{l - l/s + l/2s^2 - cl/2s}. \]
 \end{lemma}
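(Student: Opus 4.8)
The plan is to induct on $l$ (equivalently, on the number of distinct values appearing in a $(2,l)$-complex solution), peeling off one "new" variable at a time and using the hypothesis $E_{s,2}(A) \ll_s |A|^{2s-2+1/s-c}$ together with the crude bounds from Lemma~\ref{awk} to control the remaining system. Concretely, fix a $(2,l)$-complex solution $(a_{1,1},\dots,a_{2,s})$ to $E_1 = E_2$; the constraint $\{a_{1,1},\dots,a_{1,s}\}\neq\{a_{2,1},\dots,a_{2,s}\}$ forces some index, say in the second block, whose value is distinct from at least one value in the first block, and more usefully, since there are $l$ distinct values total among $2s$ slots, we can locate $l$ slots carrying pairwise distinct values and $2s-l$ slots whose values are already "determined" (repeats of earlier ones). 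The base case $l=2$ is essentially $E_{s,2}(A)$ restricted to solutions with at most two distinct values, which is trivially $\ll |A|^2$; but the interesting base is $l$ large, so it is cleaner to set up the induction downward from $l=2s$, where $\Sigma_{2s,s,2}(A)$ is a sub-count of $E_{s,2}(A)$ and the claimed exponent $2s - 2 + 1/s - c$ matches the hypothesis up to the $-cl/2s$ bookkeeping with $l = 2s$.

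The key mechanism is a \emph{deletion/projection} step. Given the target bound exponent $f(l) := l - l/s + l/(2s^2) - cl/(2s)$, one checks that $f(l) - f(l-1) = 1 - 1/s + 1/(2s^2) - c/(2s)$, so it suffices to show that passing from $l$ distinct values to $l-1$ costs a factor of at most $|A|^{1 - 1/s + 1/(2s^2) - c/(2s)}$. To do this, I would freeze one distinguished variable $x$ (one of the $l$ distinct values), summing over its $\le |A|$ choices at the very end, and for each fixed $x$ bound the number of completions of the remaining $2s-1$ slots using Hölder/Cauchy--Schwarz to split the two linear blocks and invoke the moment inequality $\sum \mathds{1}_{\cdots} \le E_{s,2}(A_1)^{1/2s}\cdots E_{s,2}(A_{2s})^{1/2s}$ from Lemma~\ref{awk}, where some of the $A_i$ are singletons $\{x\}$ (contributing a factor $1$) and the rest are $A$. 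The single-variable fibres are where the saving $|A|^{1/s - 1/(2s^2)}$ over the trivial bound $|A|$ comes from: restricting one coordinate of an $E_{s,2}$-type count to a fixed value beats the generic count by roughly $E_{s,2}(A)^{1/s}/|A|^{\text{(trivial)}}$, and substituting $E_{s,2}(A) \ll |A|^{2s-2+1/s-c}$ produces exactly the exponent $1 - 1/s + 1/(2s^2) - c/(2s)$ after the arithmetic.

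The main obstacle will be organising the combinatorics of which slots are "new" versus "repeated" cleanly enough that the Hölder split genuinely isolates the distinguished variable while keeping every factor either a singleton or a full copy of $A$ and every exponent summing correctly to $1$ — in particular handling the case where the new value in passing from $l-1$ to $l$ sits in one block only, versus possibly both blocks, and ensuring the condition $\{a_{1,\cdot}\}\neq\{a_{2,\cdot}\}$ is preserved along the induction (so that we are always counting genuinely $(2,l')$-complex tuples and not accidentally over/undercounting degenerate ones). A secondary technical point is that $E_{s,2}$ is only directly useful for positive reals (Lemma~\ref{awk} is stated on $(0,\infty)$), whereas here $A \subseteq \mathbb{R}$; I would handle this by a standard positive/negative decomposition of $A$, noting that such a split changes all counts only by $O_s(1)$ factors and the claimed bound is of the form $\ll_s$. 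Once the one-step estimate is in place, the induction closes immediately by multiplying the per-step factors and the final $|A|$ from the frozen variable, matching $f(l)$.
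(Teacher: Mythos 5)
There is a real gap. The paper's proof is a short Fourier-analytic argument: a $(2,l)$-complex solution is parametrised by a multiplicity vector $\vec{c}=(c_1,\dots,c_l)$ with $0<|c_i|\leq 2s$ and $\sum c_i=0$, the $2s$-slot constraint collapses to a \emph{single} linear equation $c_1a_1+\cdots+c_l a_l=0$ in $l$ free variables $(a_1,\dots,a_l)\in A^l$, and H\"older applied to $\int \prod_{i=1}^l f(c_i\alpha)\,d\alpha$ with exponent $2s$ in each factor yields exactly $E_{s,2}(A)^{l/2s}$. The essential gain is that the collapse produces \emph{exactly $l$ factors} of $E_{s,2}(A)^{1/2s}$ — this is where the exponent $l-l/s+l/2s^2-cl/2s=(l/2s)(2s-2+1/s-c)$ comes from.

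Your freezing step loses this. If you fix a distinguished value $x$ and apply the Lemma~\ref{awk} moment inequality with $A_i=\{x\}$ on the $m$ slots carrying $x$ and $A_i=A$ on the remaining $2s-m$ slots, you obtain
\[
\Sigma_{l,s,2}(A)\ \ll_s\ |A|\cdot E_{s,2}(A)^{(2s-m)/2s}.
\]
This ignores that the remaining $2s-m$ slots must themselves use only $l-1$ distinct values, so it treats them as free and overcounts. Quantitatively, $2s-m\geq l-1$ and the trivial $|A|\leq E_{s,2}(A)^{1/2s}$ is \emph{false} (indeed $E_{s,2}(A)\ll|A|^{2s-2+1/s-c}<|A|^{2s}$), so $|A|\cdot E_{s,2}(A)^{(2s-m)/2s}$ is genuinely larger than the target $E_{s,2}(A)^{l/2s}$: the bound you get does not improve as $l$ decreases, whereas the claimed bound does.

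The inductive framing compounds the problem: there is no natural inequality $\Sigma_{l,s,2}\lesssim |A|^{E/2s}\,\Sigma_{l-1,s,2}$ (in either direction) because these count disjoint families of tuples, and freezing one variable does not turn a $(2,l)$-complex count into a $(2,l-1)$-complex one — the residual system is a \emph{constrained} version of $E_{s,2}$, not $\Sigma_{l-1,s,2}$. Your base case $l=2s$ is fine, but the induction has no mechanism to descend. To repair the argument you need the multiplicity-collapse step: reduce to $l$ genuine free variables per multiplicity pattern (with $O_s(1)$ patterns), and only then apply H\"older / the moment inequality, so that each of the $l$ variables contributes one factor of $E_{s,2}(A)^{1/2s}$. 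Your positive/negative-split observation for using Lemma~\ref{awk} over $\mathbb{R}$ is a reasonable technical remark but is moot given the structural gap above.
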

 \begin{proof}
Writing $f(\alpha) = \sum_{a \in A} e(\alpha a)$ for every $\alpha \in [0,1)$, we may use orthogonality to deduce the following inequality
\begin{align*}
  \Sigma_{l,s,2}(A)
  \ll_{s} \sum_{\substack{0 < |c_1|, \dots, |c_l| \leq 2s, \\ c_1 + \dots + c_l = 0 }}\int_{[0,1)} f(c_1\alpha) \dots f(c_l \alpha) d \alpha.
\end{align*}
Applying H\"{o}lder's inequality and periodicity, we see that
\begin{align*}
  \Sigma_{l,s,2}(A)
  \ll_{s} \prod_{i=1}^{l} \Big(\int_{[0,1)} |f(c_i \alpha)|^{2s} d \alpha \Big)^{1/2s} = \Big(\int_{[0,1)} |f(\alpha)|^{2s} d \alpha \Big)^{l/2s},
\end{align*}
whereupon, we obtain the bound
\begin{equation*}
 \Sigma_{l,s,2}(A) \ll_{s}   E_{s,2}(A)^{l/2s} \ll_{s} |A|^{l - l/s + l/2s^2 - cl/2s},
 \end{equation*} 
 which proves the lemma. 
 \end{proof}

  \begin{lemma}\label{lem: sigma l,2,k}
Let $k$ be a natural number and let $A$ be a finite set of real numbers such that
\[ E_{2,k}(A) \ll_{k} |A|^{k + 1/2 - c}, \]
for some $c>0$. Then we have that
\[ \Sigma_{2k-1,2,k}(A) \ll_{k} |A|^{k-1/2+1/2k-c(1-1/k)}. \]
 \end{lemma}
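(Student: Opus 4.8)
We need to bound $\Sigma_{2k-1,2,k}(A)$, the number of solutions to
\[ a_{1,1}+a_{1,2} = a_{2,1}+a_{2,2} = \dots = a_{k,1}+a_{k,2} \]
with $(a_{1,1},\dots,a_{k,2}) \in A^{2k}$ being $(k,2k-1)$-complex — meaning exactly $2k-1$ distinct values among the $2k$ entries, and all $k$ pairs $\{a_{i,1},a_{i,2}\}$ distinct — under the hypothesis $E_{2,k}(A) \ll_k |A|^{k+1/2-c}$.

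The key structural observation: with $2k$ entries and exactly $2k-1$ distinct values, exactly one value is repeated (appearing twice), all others appearing once. There are essentially two cases for where the repetition occurs: (i) the repeated value lies within a single pair, i.e., $a_{i,1} = a_{i,2}$ for some $i$ (so that pair is "diagonal"); or (ii) the repeated value is split across two different pairs, say $a_{i,\cdot} = a_{j,\cdot}$ for $i \neq j$. Since there are $O(k)$ choices of which pair is diagonal, or $O(k^2)$ choices of which two pairs share an element and in which slots, it suffices to bound the count in each case and multiply by $O_k(1)$.

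**Plan of proof.**

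First I would set up the generating function $f(\alpha) = \sum_{a\in A} e(\alpha a)$ and recall that, by orthogonality, $E_{2,k}(A) = \int_{[0,1)} |f(\alpha)|^2 |f(-\alpha)|^2 \cdots$ — more precisely $E_{2,k}(A) = \int_{[0,1)^{?}}\dots$; actually the cleanest route is to express $\Sigma_{2k-1,2,k}(A)$ directly as a weighted integral. In Case (i), if the $i$-th pair is diagonal with common value $a$, then the common sum is $2a$, and the remaining $k-1$ pairs are honest $2$-term sums equal to $2a$; the number of such configurations is at most $\sum_{a\in A} r_2(A;2a)^{k-1}$, and after crudely bounding $r_2(A;2a)\le r_2(A;2a)$ and summing, one gets something dominated by $E_{2,k}(A)$ (or a closely related moment) — this case should be comfortably within the claimed bound. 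In Case (ii), suppose $a_{i,1} = a_{j,1} = a$ (the other slot choices are symmetric). Then the system becomes: $a + a_{i,2} = a + a_{j,2} = (\text{common value } n)$, forcing $a_{i,2} = a_{j,2}$, but that would create a second repetition unless $a_{i,2}=a_{j,2}=a$ too, contradicting distinctness of the pairs; so in fact the viable sub-case is $a_{i,1}=a_{j,2}=a$ with the slots "crossed". Then $a + a_{i,2} = a_{j,1} + a = n$, giving $a_{i,2} = a_{j,1}$, again a forced coincidence to be tracked carefully. The real content is that once we fix which value repeats and where, the number of free "degrees of freedom" drops, and the remaining count is controlled by $E_{2,k-1}$-type or $E_{2,k}$-type quantities.

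**Executing the dominant estimate.**

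The efficient way to handle everything uniformly is: pull out the repeated value, say it equals $a$, reducing to a system on $2k-1$ slots in which $a$ appears once, then apply Hölder/orthogonality. Concretely, I expect the bound
\[ \Sigma_{2k-1,2,k}(A) \ll_k |A| \cdot \max\Big( \sup_n r_2(A;n), \big(E_{2,k-1}(A)\big)^{?}\Big) \]
after summing over the $O(1)$ free value $a$, but the sharp form requires running Hölder on a product of $k-1$ factors each of the form $f(\pm\alpha)$ integrated to the $2k-2$ power, mirroring Lemma~\ref{lem: sigma l,s,2}. Using $E_{s,k}(A) \le |A|^{sk-k+1}$ for the trivial bounds and the hypothesis $E_{2,k}(A) \ll_k |A|^{k+1/2-c}$, interpolating gives $E_{2,k-1}(A) \ll_k |A|^{k-1/2 + 1/2k - c(1-1/k)}/|A|^{?}$... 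I would compute the exponent by writing $E_{2,k-1}(A) = \sum_n r_2(A;n)^{k-1} \le (\sum_n r_2(A;n)^k)^{(k-2)/(k-1)} (\sum_n r_2(A;n))^{1/(k-1)} = E_{2,k}(A)^{(k-2)/(k-1)} |A|^{2/(k-1)}$, which with the hypothesis yields $E_{2,k-1}(A) \ll_k |A|^{(k+1/2-c)(k-2)/(k-1) + 2/(k-1)}$; a short calculation should match this (times the $|A|$ from summing over the repeated value, or times $\sup_n r_2(A;n)\le |A|$ where needed) against the target exponent $k - 1/2 + 1/2k - c(1-1/k)$.

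**Main obstacle.** The delicate point is the bookkeeping in Case (ii): when the repeated value is split across two pairs, the simultaneous equations force a further coincidence of the remaining entries in those two pairs, and one must check this does not secretly create a third repetition (which would contradict the $(k,2k-1)$-complex hypothesis) — equivalently, one must verify which slot-patterns are actually admissible, and show that in every admissible pattern the residual count is governed by $E_{2,k-1}(A)$ (or better) rather than by the larger $E_{2,k}(A)$. Getting the exponent arithmetic to land exactly on $k-1/2+1/2k - c(1-1/k)$ — and in particular confirming that the "diagonal pair" case (i) is not the bottleneck — is the part most likely to require care rather than cleverness. I would also double-check that the implied constants genuinely depend only on $k$, since the number of case distinctions grows polynomially in $k$.
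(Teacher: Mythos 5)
Your structural analysis of where the single repetition can live is correct and is exactly the reduction the paper uses: a $(k,2k-1)$-complex solution has precisely one value appearing twice, and if that value were split between two distinct pairs $\{a_{i,1},a_{i,2}\}$ and $\{a_{j,1},a_{j,2}\}$, the common-sum condition would force the remaining two entries to agree as well, giving $\{a_{i,1},a_{i,2}\}=\{a_{j,1},a_{j,2}\}$, which the definition of $W_{k,l}$ forbids. So indeed one may assume $a_{i,1}=a_{i,2}$ for some $i$, i.e. $\Sigma_{2k-1,2,k}(A)\ll_k\sum_{a\in A}r_2(A;2a)^{k-1}$. Up to this point you are in agreement with the paper.

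The gap is in the quantitative step, which you deferred to ``a short calculation''. That calculation does not close. Bounding $\sum_{a\in A}r_2(A;2a)^{k-1}$ crudely by $E_{2,k-1}(A)$ and then interpolating $E_{2,k-1}(A)\le E_{2,k}(A)^{(k-2)/(k-1)}|A|^{2/(k-1)}$ gives exponent
\[
(k+\tfrac12-c)\tfrac{k-2}{k-1}+\tfrac{2}{k-1}
 = k-\tfrac12+\tfrac{1}{2(k-1)}-c\,\tfrac{k-2}{k-1},
\]
which exceeds the target exponent $k-\tfrac12+\tfrac{1}{2k}-c(1-\tfrac1k)$ by $\tfrac{1+2c}{2k(k-1)}>0$, so the bound you would obtain is strictly weaker than the lemma's claim. (Inserting an additional factor of $|A|$, as you tentatively suggest when ``summing over the repeated value,'' makes this worse; that sum is already built into $\sum_{a\in A}r_2(A;2a)^{k-1}$.) The missing idea is not to discard the constraint that the common sum lies in $2\cdot A$: since $|2\cdot A|\le|A|$, one writes
\[
\sum_{a\in A}r_2(A;2a)^{k-1}=\sum_{x}r_2(A;x)^{k-1}\,\mathds{1}_{2\cdot A}(x)
\le\Big(\sum_{x}r_2(A;x)^{k}\Big)^{1-1/k}|A|^{1/k}=E_{2,k}(A)^{1-1/k}|A|^{1/k},
\]
by H\"older with exponents $k/(k-1)$ and $k$, and this yields exactly $|A|^{k-1/2+1/(2k)-c(1-1/k)}$. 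The extra $1/k$ worth of leverage comes precisely from the indicator being supported on only $|A|$ values of $x$, which your $E_{2,k-1}$ route throws away.
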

 \begin{proof}
Without loss of generality we can assume that $a_{i,1} = a_{i,2}$ for some $1 \leq i \leq k$. We now apply H\"{o}lder's inequality to get
\begin{align*}
 \Sigma_{2k-1,2,k}(A) & \ll_k \sum_{x \in 2A} \big(\sum_{a_1, a_2 \in A} \mathds{1}_{x = a_1 + a_2}\big)^{k-1} \mathds{1}_{2\cdot A}(x) \\
&  \leq \big( \sum_{x} \big(\sum_{a_1, a_2 \in A} \mathds{1}_{x = a_1 + a_2}\big)^k\big)^{1 - 1/k} |A|^{1/k}  \\
& = E_{2,k}(A)^{1 - 1/k} |A|^{1/k} ,
\end{align*}
which, when combined with the hypothesis recorded above, delivers the required bound
\[ \Sigma_{2k-1,2,k}(A) \ll_{k} |A|^{k - 1/2 + 1/2k - c (1 - 1/k)  }. \qedhere \]
 \end{proof}

In the remaining parts of this section, we will focus on estimating $\Sigma_{l,k,s}(A)$ for a much more general range of $k,s$, and we begin this endeavour by presenting the following straightforward upper bound on the number of solutions to a system of linear equations of a given rank with all the variables lying in some prescribed set.

\begin{lemma} \label{lin}
Let $m,n,r$ be natural numbers, let $M$ be a $m \times n$ matrix with real coefficients, let $\vec{u}=(u_1,\dots,u_m)$ be some vector in $\mathbb{R}^m$ and let $A$ be a finite, non-empty set of real numbers. Suppose that the matrix $M$ has $r$ linearly independent rows. Then the number of solutions to 
\[ M \vec{a}^T = \vec{u}^T, \]
with $\vec{a} = (a_1, \dots, a_n) \in A^n$ is at most $O(|A|^{n-r})$. 
\end{lemma}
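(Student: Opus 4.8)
The plan is to reduce to a full-rank square subsystem via elementary linear algebra. First, since $M$ has $r$ linearly independent rows, I would select those $r$ rows to form an $r \times n$ submatrix $M'$ of rank exactly $r$, together with the corresponding subvector $\vec{u}'$ of $\vec{u}$. Every solution $\vec{a} \in A^n$ of $M\vec{a}^T = \vec{u}^T$ also satisfies $M'\vec{a}^T = (\vec{u}')^T$, so it suffices to bound the number of solutions of this latter system.

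Next, since $M'$ has rank $r$, it has $r$ linearly independent columns. Permuting the coordinates $a_1, \dots, a_n$, which leaves the solution count unchanged since $A^n$ is invariant under coordinate permutations, I may assume that these are the first $r$ columns, so that $M' = (M_1 \mid M_2)$ where $M_1$ is an invertible $r \times r$ matrix and $M_2$ is an $r \times (n-r)$ matrix. Writing $\vec{a}^T = (\vec{b}^T, \vec{c}^T)^T$ with $\vec{b} \in A^r$ and $\vec{c} \in A^{n-r}$, the equation $M'\vec{a}^T = (\vec{u}')^T$ becomes $M_1 \vec{b}^T + M_2 \vec{c}^T = (\vec{u}')^T$, and hence $\vec{b}^T = M_1^{-1}\bigl((\vec{u}')^T - M_2 \vec{c}^T\bigr)$.

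Thus $\vec{b}$ is uniquely determined by $\vec{c}$, so each of the at most $|A|^{n-r}$ choices of $\vec{c} \in A^{n-r}$ gives rise to at most one admissible $\vec{a}$ (and possibly none, if the forced value of $\vec{b}$ fails to lie in $A^r$). Summing over $\vec{c}$ yields the bound $O(|A|^{n-r})$, in fact with implied constant $1$. I do not expect any genuine obstacle here; the only points meriting a word of justification are that passing to a maximal set of linearly independent rows preserves the solution set, and that permuting coordinates preserves the count, both of which are immediate.
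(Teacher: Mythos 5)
Your proof is correct and follows essentially the same route as the paper: both reduce to a full-rank $r\times r$ subsystem obtained by selecting pivot rows and columns, so that once the $n-r$ remaining coordinates of $\vec{a}$ are fixed, the other $r$ are uniquely determined. The only cosmetic difference is that you invert the $r\times r$ pivot block directly, while the paper passes through row echelon form via Gaussian elimination; the underlying argument is identical.
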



\begin{proof}
We apply Gaussian elimination on $M$ and obtain its row echelon form $M'=PM$ where $P$ is a $m\times m$ matrix. Note that the first $r$ rows in $M'$ are linearly independent and upper triangular, and the other $m-r$ rows are $\vec{0}$. Let $\vec{v}_i$ be the $i$-th row vector of $M'$. Without loss of generality we may assume that the $i$-th entry of $\vec{v}_i$ is non-zero for every $1\leq i\leq r$. Let $\vec{u}'=(u_1',\dots,u_m')$ be $P\vec{u}^T$. 
Since the solutions to $M \vec{a}^T = \vec{u}^T$ are the solutions to $M' \vec{a}^T =\vec{u}'^T$, by fixing $(a_{r+1},\dots,a_n)\in A^{n-r}$, we have
\[
(M')_{r\times r} \vec{a}_r^T={\vec{u}'}_r^T,
\]
where $(M')_{r\times r}$ contains the first $r$ rows and $r$ columns of $M'$, $\vec{a}_r=(a_1,\dots,a_r)$ and $\vec{u}'_r=(u'_1,\dots,u'_r)$. By the assumption, $(M')_{r\times r}$ has full rank, and hence  $\vec{a}_r=(a_1,\dots,a_r)\in \mathbb{R}^r$ can be uniquely determined. Finally, since there are at most $|A|^{n-r}$ ways to choose $(a_{r+1},\dots,a_n) \in A^{n-r}$, the desired conclusion follows. 
\end{proof}

We finish this section by presenting the following lemma that enables us to find appropriate bounds for $\Sigma_{l,s,k}(A)$ when $s,k$ are natural numbers with $k \geq 3s$.

\begin{lemma} \label{lim2}
Let $s,k,l$ be natural numbers such that $k \geq 3s$ and $2 \leq l \leq sk$. Moreover suppose that $A$ is a finite set of real numbers such that
\[ E_{s,k}(A) \ll_{s,k} |A|^{sk - k + 1/s - c}, \]
for some $c>0$. Then we have that
\[ \Sigma_{l,s,k}(A) \ll_{s,k} |A|^{l - l/s + 1/s - c'}, \]
for some $c'\geq \min\{(k-2s)c/k,1/s\}$. 
\end{lemma}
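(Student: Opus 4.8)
The plan is to split the count $\Sigma_{l,s,k}(A)$ according to the partition structure of the $k$ blocks $E_1, \dots, E_k$, where $E_i = a_{i,1} + \dots + a_{i,s}$. Since each solution counted by $\Sigma_{l,s,k}(A)$ forces all $E_i$ equal, but the $s$-tuples $\{a_{i,1},\dots,a_{i,s}\}$ to be pairwise distinct as multisets, one can try to exploit that having $k$ distinct $s$-tuples all summing to the same value $n$ is a strong constraint once $E_{s,k}(A)$ is small. First I would reduce to the "generic" situation: write the number of distinct values among the $a_{i,j}$ as $l$, and observe that we may think of the system as a linear system in $l$ formal unknowns $y_1,\dots,y_l$ (the distinct values), together with the constraint that $(y_1,\dots,y_l) \in A^{\ast}$, i.e.\ takes distinct values in $A$. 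The $k-1$ equations in $\eqref{alt}$ become $k-1$ linear equations in $y_1,\dots,y_l$; write $M$ for the corresponding $(k-1)\times l$ coefficient matrix (which depends on the combinatorial "pattern" of which unknown sits in which slot). By Lemma \ref{lin}, for a fixed pattern the number of solutions with distinct values is $O(|A|^{l - r})$ where $r = \operatorname{rank} M$. Since the number of patterns is $O_{s,k}(1)$, it suffices to show that for every admissible pattern either $r$ is large enough to directly give the bound, or the pattern has enough repetition / structure that we can instead feed it into the hypothesis $E_{s,k}(A) \ll |A|^{sk-k+1/s-c}$.

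The key dichotomy I expect to run is the following. If the rank $r$ of $M$ is at least $k - 2s + $ (something like $l/s - l(k-1)/(ks) + \dots$), then Lemma \ref{lin} already yields $\Sigma \ll |A|^{l-r}$ which beats $|A|^{l - l/s + 1/s - c'}$ for the claimed $c'$. Otherwise $r$ is small, which by a rank-nullity / elementary linear algebra argument means the $k$ blocks cluster: many of the $s$-tuples $\{a_{i,1},\dots,a_{i,s}\}$ must be "dependent" in the sense that after fixing a few of them the rest are pinned, so one can bound $\Sigma$ by roughly $(\sup_n r_s(A;n))^{?} \cdot (\text{number of free blocks})$ or, more robustly, by Hölder-type manipulation reduce to a lower power of $E_{s,2}(A)$ or $E_{s,k'}(A)$ for some $k' < k$, which via the monotonicity-type inequalities of Lemma \ref{awk} and the hypothesis is again small enough. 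The exponent $c' \geq \min\{(k-2s)c/k, 1/s\}$ strongly suggests the worst case is exactly the borderline one where $\approx 2s$ of the blocks are "free" and carry the full $E_{s,k}$ information raised to the power $(k-2s)/k$ (via Hölder, turning a $k$-fold energy into something comparable to $E_{s,k}(A)^{(k-2s)/k}$ times trivial factors), while the $1/s$ cap comes from the crude bound $\sup_n r_s(A;n) \le |A|^{s-1}$ and $\sum_n r_s(A;n) = |A|^s$ that always applies. So in the second branch I would: (i) use the distinctness of blocks to peel off $2s$ of them, (ii) apply Hölder in the remaining $k-2s$ equations to produce a factor $E_{s,k}(A)^{(k-2s)/k}$ up to $|A|$-powers, (iii) bound the peeled blocks trivially, and (iv) check the arithmetic of the exponents.

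The main obstacle will be step (ii)–(iv): organising the case analysis over patterns so that the linear-algebra bound from Lemma \ref{lin} and the energy bound always combine to give a uniform loss of at least $\min\{(k-2s)c/k, 1/s\}$ in the exponent, and in particular verifying that the "intermediate" patterns (neither high rank nor maximally clustered) do not do better than the claimed bound. A secondary subtlety is handling the constraint that the $y_i$ are \emph{distinct} and that no two blocks are equal as multisets: this is what rules out degenerate patterns that would otherwise make $r$ too small, and it needs to be used carefully (e.g.\ via inclusion–exclusion over which $y_i$ coincide, which only costs $O_{s,k}(1)$ and can only decrease $l$, hence is harmless). I would handle the distinctness by a standard Möbius/inclusion–exclusion argument reducing to the case of genuinely distinct values at the cost of summing $O_{s,k}(1)$ terms of the same shape with smaller $l$, and then verify the target exponent is monotone enough in $l$ for this to go through.
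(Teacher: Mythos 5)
Your high-level plan overlaps with the paper's: fix the $O_{s,k}(1)$ coefficient matrix $M$ (the pattern), case on $\mathrm{rank}(M)$, use Lemma~\ref{lin} when the rank is favourable, and use H\"older together with the hypothesis on $E_{s,k}(A)$ otherwise. But the proposal leaves two central pieces unresolved, and they are precisely the content of the lemma.

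First, in the full-rank case $\mathrm{rank}(M)=k-1$, the dangerous regime is $s(k-1)<l\le sk$, where Lemma~\ref{lin} alone is insufficient. The paper does not peel off $2s$ blocks; it identifies the $d\le 2(sk-l)<2s$ tuples that actually contain a repeated element, applies H\"older with exponent $k/(k-d)$ to extract $E_{s,k}(A)^{(k-d)/k}$ (this is why $(k-2s)/k$ appears, since $d<2s$), and then must show two rank facts about the $d$-block subsystem: the homogeneous system $E_1=\cdots=E_d$ has rank $d-1$, and the inhomogeneous system $E_1=\cdots=E_d=n$ has rank $d$, proved by a multiplication-by-$(1,\dots,1)^T$ trick contradicting $\mathrm{rank}(M)=k-1$. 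Your sketch neither identifies $d$ correctly nor addresses the rank computation; without it the exponent bookkeeping in \eqref{j1i1}--\eqref{j2i2} cannot be carried out.

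Second, in the rank-deficient case $\mathrm{rank}(M)=r<k-1$, Lemma~\ref{lin} gives $\Sigma_l\ll|A|^{l-r}$, but the naive bound $l\le s(r+1)$ only yields $c'\ge 0$, which is useless. The paper proves the strictly stronger $l\le sr$ by analysing the signs of the coefficients in a dependency $\vec{R}_{k-1}=\sum_i\alpha_i\vec{R}_i$ and using the hypothesis that no two blocks are equal as multisets to force $|I|,|J|\ge 1$; this is exactly the source of the $1/s$ term in $c'$. Your proposal does not anticipate this argument and even suggests $r$ being ``small'' pushes you to the H\"older branch, but in fact the paper handles small rank purely by this combinatorial bound with no recourse to $E_{s,k}$. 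Finally, the inclusion--exclusion over coincidences of the $y_i$ that you propose is unnecessary here: $\Sigma_{l,s,k}$ is by definition a sub-count of the full solution set, so for an upper bound one simply restricts to solutions with distinct values and applies Lemma~\ref{lin} directly.
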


\begin{proof}[Proof of Lemma $\ref{lim2}$]
For ease of exposition, we will write $\Sigma_{l} = \Sigma_{l,s,k}(A)$, suppressing the dependence on $s,k$ and $A$. Let $M$ be the coefficient matrix of the system of linear equations described in $\eqref{alt}$, and in particular, $M$ will be some $(k-1) \times l$ matrix with entries from $[-2s, 2s] \cap \mathbb{Z}$. By incurring a factor of $O_{s,k}(1)$ in our upper bounds, which subsequently gets absorbed in the implicit constant of the Vinogradov notation, we may fix all the entries in $M$. 
\par

We divide our proof into two cases depending on the rank of the matrix $M$, and so, we first consider the case when $\textrm{rank}(M) = k-1$. Furthermore, in this setting, it suffices to analyse the situation when $l \in (s(k-1), sk]$, since otherwise, we may use Lemma $\ref{lin}$ to deduce that
\[
\Sigma_l \ll_{s,k} |A|^{l-k+1}=|A|^{l-\frac{l-1}{s}-\frac{(sk-l)-(s-1)}{s}},
\]
Thus, we assume that $s(k-1) < l \leq sk$. In this case, there are $sk-l$ repetitive variables, all of which lie in $d$ different $s$-tuples, for some $d\leq 2(sk-l)$. By losing a factor of $O_{s,k}(1)$, we may assume that the $d$ $s$-tuples which contain the repetitive elements are precisely $(a_{1,1},\dots,a_{1,s})$, $\dots$,  $(a_{d,1},\dots,a_{d,s})$. Thus, we have that
\begin{align*}
    \Sigma_{l} 
    & \ll_{s,k} \sum_{n} \sum_{a_{d+1,1}, \dots, a_{k,s} \in A} \mathds{1}_{E_{d+1} = \dots = E_{k} = n} \sum_{\vec{a} \in W_{d,l}} \mathds{1}_{E_{1} = \dots = E_{d} =n}  \\
    & = \sum_{n} r_{s}(A;n)^{k-d} \sum_{\vec{a} \in W_{d,l}} \mathds{1}_{E_{1} = \dots = E_{d} =n} ,
\end{align*} 
where we use $\vec{a} \in W_{d,l}$ to denote the element $(a_{1,1}, \dots, a_{d,s}) \in W_{d,l}$.
Applying H\"older's inequality, we get that
\begin{equation} \label{kmov}
\Sigma_l \ll_{s,k} E_{s,k}(A)^{\frac{k-d}{k}}\Big(\sum_{n}\Big(\sum_{\vec{a} \in W_{d,l}} \mathds{1}_{E_{1} = \dots = E_{d} =n} \Big)^{\frac{k}{d}}\Big)^{\frac{d}{k}}.
\end{equation}
\par

Using Lemma $\ref{lin}$ along with the fact that $k \geq 2s > 2(sk-l) \geq d$, we may conclude that
\begin{equation} \label{j1i1}
    \sum_{\vec{a} \in W_{d,l}} \mathds{1}_{E_{1} = \dots = E_{d}} \ll_{s,k} |A|^{sd - (sk-l) - (d-1)},
\end{equation} 
as well as that
\begin{equation} \label{j2i2}
 \sum_{\vec{a} \in W_{d,l}} \mathds{1}_{E_{1} = \dots = E_{d} =n} \ll_{s,k} |A|^{sd - (sk - l) -  d  } 
\end{equation} 
holds for every $n \in \mathbb{R}$. More specifically, in order to prove $\eqref{j1i1}$, note that the system $E_1 = \dots = E_{d}$ can be rewritten in the form $\eqref{alt}$, wherein, the associated matrix has rank $d-1$. This follows from the fact that $\mathrm{rank}(M)=k-1$. Moreover, since there are exactly $sd - (sk-l)$ distinct elements in each solution, we can now use Lemma $\ref{lin}$ to deliver the claimed inequality. The deduction of the second inequality from Lemma $\ref{lin}$ requires some further maneuvers, which we briefly record here. The reader will note that it suffices to show that the row vectors $\vec{C}_1, \dots, \vec{C}_d$ of the matrix affiliated with the system $E_1 = \dots = E_d = n$ are linearly independent. We prove this via contradiction, and so, without loss of generality, we may suppose that $c_1, \dots, c_{d-1}$ are real numbers satisfying 
\[ \vec{C}_{d} = \sum_{i = 1}^{d-1} c_i \vec{C}_{i}. \]
Multiplying the above equation by $\vec{v}^T$, where $\vec{v} = (1,\dots, 1) \in \mathbb{R}^l$, and employing the fact that $\vec{C}_i \vec{v}^T = s$ for each $1 \leq i \leq s$, we deduce that $\sum_{i=1}^{d-1} c_i = 1$. But this allows us to write
\[ \vec{R}_{d} = \sum_{i=1}^{d-1} c_i \vec{R}_i, \]
where $\vec{R}_1, \dots, \vec{R}_{k-1}$ are the row vectors of the matrix $M$, thus contradicting the fact that $\mathrm{rank}(M)=k-1$.
\par

Combining $\eqref{j1i1}$ and $\eqref{j2i2}$ with $\eqref{kmov}$, we get that
\begin{align*}
\Sigma_l 
& \ll_{s,k} |A|^{(s-1)(k-d)+\frac{k-d}{sk}-\frac{(k-d)c}{k}}|A|^{(sd-sk+l-d)\frac{k-d}{k}} \Big(\sum_{\vec{a} \in W_{d,l}} \mathds{1}_{E_{1} = \dots = E_{d}} \Big)^{\frac{d}{k}}\\
& \ll_{s,k} |A|^{(s-1)(k-d)+\frac{k-d}{sk}-\frac{(k-d)c}{k}}|A|^{(sd-sk+l-d)\frac{k-d}{k}}|A|^{(sd-sk+l-d+1)\frac{d}{k}}\\
& \leq |A|^{l-\frac{l-1}{s}-\frac{(sk-l)(k-2s+2)}{sk}-\frac{(k-d)c}{k}}\leq |A|^{l-\frac{l-1}{s}-c'},
\end{align*}
with $c'\geq c(k-2s)/k$, whereupon, we are done when $\mathrm{rank}(M)=k-1$.
\par

Thus, we proceed with our second case, that is, when $\mathrm{rank}(M)=r<k-1$. This already implies that $l \leq s(r+1)$, and in fact, we will show that the stronger bound $l \leq sr$ must hold. This, in turn, combines with Lemma $\ref{lin}$ to deliver the estimate
\[
\Sigma_l\ll |A|^{l-r}=|A|^{l-\frac{l-1}{s}-\frac{sr-l+1}{s}}=|A|^{l-\frac{l-1}{s}-c'}
\]
where $c'\geq 1/s$. We now turn to proving that our claim holds, that is, $l \leq sr$. Without loss of generality, we may assume that the first $r$ rows in $M$ are linearly independent. Since $r  \leq k-1$, there exist $\alpha_1, \dots, \alpha_r \in \mathbb{R}$ such that
\[
\vec{R}_{k-1}=\sum_{i=1}^r \alpha_i \vec{R}_i. 
\]
 Let $I,J\subseteq[r]$ be sets such that $\alpha_i>0$ for $i\in I$ and $\alpha_i<0$ for $i\in J$, and let $K = [r]\setminus (I \cup J)$. 
\par

As all the $s$-tuples that we are analysing correspond to essentially distinct representations, we have that $|I|,|J|\geq 1$, whence
\[ |K| + |I| \leq r-1 . \]
 Writing $\beta_j =-\alpha_j$ for each $j \in J$, we get that
\[
\vec{R}_{k-1}=\sum_{i \in I}\alpha_i \vec{R}_i-\sum_{j \in J}\beta_j \vec{R}_j.
\]
Thus, setting $F_i = \vec{R}_{i} \vec{x}^T$ where $\vec{x} = (x_1, \dots, x_l)$ is a vector with formal variables $x_1, \dots, x_l$ as entries, we may deduce from the preceding expression that
\[ F_{k-1} - F_k = \sum_{i \in I} \alpha_i (F_i - F_k) - \sum_{j \in J} \beta_j (F_j - F_k), \]
and so,
\[  \sum_{j \in J} \beta_j F_j = \sum_{i \in I} \alpha_i F_i - F_{k-1} + (\sum_{j \in J} \beta_j - \sum_{i \in I} \alpha_i + 1) F_k .\]
Since $\alpha_i, \beta_j > 0$ for each $i \in I$ and $j \in J$, we must have that any variable appearing in $F_j$, for every $j \in J$, either occurs in $F_i$ for some $i \in I$ or it occurs in $F_{k}$. Thus, we deduce that all the distinct variables arise either from $F_i$, for some $i \in I \cup K$, or from $F_{k}$. Finally, as $l$ is bounded above by the number of distinct variables in $F_1, \dots, F_r$ , we infer that
\[ l \leq s(|I| + |K|) + s \leq s(r-1) + s = rs, \]
and so, our claim holds true. This finishes the proof of Lemma $\ref{lim2}$. 
\end{proof}


\section{Random sampling and deletion}

We will use this section to record various lemmata that connect bounds on additive and multiplicative energies to the existence of large $B_{s}[g]$ subsets.

\begin{lemma} \label{gens}
Let $A \subset \mathbb{N}$ be a finite set, let $s \geq 2$ be a natural number and let $c >0$ be a real number such that
\[ E_{s,2}(A) \leq |A|^{2s - 2 + 1/s - c}. \]
Then there exists $B \subseteq A$ such that $B$ is a $B_{s}^+[1]$ set satisfying 
\[ |B| \gg_{s} |A|^{1/s + \delta} \ \text{for} \ \delta = c/(2s). \]
\end{lemma}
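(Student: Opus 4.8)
The plan is to use random sampling followed by deletion. First I would form a random subset $A' \subseteq A$ by keeping each element of $A$ independently with probability $p$, where $p = |A|^{\theta - 1}$ for a suitable exponent $\theta \in (0,1)$ to be optimised at the end (roughly $\theta$ slightly larger than $1/s$). The expected size of $A'$ is $p|A| = |A|^{\theta}$, and by a standard second-moment or Chernoff argument $|A'| \gg |A|^{\theta}$ with probability at least, say, $3/4$. The obstruction to $A'$ being a $B_s^+[1]$ set is the presence of ``bad'' $2s$-tuples, i.e. solutions to $a_1 + \dots + a_s = a_{s+1} + \dots + a_{2s}$ with $\{a_1,\dots,a_s\} \neq \{a_{s+1},\dots,a_{2s}\}$ as multisets; each such solution uses some number $l$ of distinct elements of $A$, with $2 \le l \le 2s$, and survives into $A'$ with probability $p^{l}$. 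The expected number of surviving bad configurations is therefore $\sum_{l=2}^{2s} p^{l} \Sigma_{l,s,2}(A)$, up to $O_s(1)$ factors accounting for how distinct values are distributed among the $2s$ slots.

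The key input is Lemma \ref{lem: sigma l,s,2}: since $E_{s,2}(A) \ll_s |A|^{2s-2+1/s-c}$, we have $\Sigma_{l,s,2}(A) \ll_s |A|^{l - l/s + l/2s^2 - cl/2s}$ for each $l$ in range. Hence the expected number of surviving bad tuples is $\ll_s \sum_{l=2}^{2s} |A|^{l(\theta - 1) + l - l/s + l/2s^2 - cl/2s} = \sum_{l=2}^{2s} |A|^{l(\theta - 1/s + 1/2s^2 - c/2s)}$. Choosing $\theta = 1/s + c/(2s)$ (which is $< 1$ for the relevant range, and can be assumed WLOG since small $c$ is the hard case) makes each exponent equal to $l(1/2s^2 - c/2s \cdot \text{something})$... more carefully, with this choice the exponent of $|A|$ in the $l$-th term becomes $l(\,c/(2s) - 1/s + 1/2s^2 - c/(2s)\,)$ — wait, I need $\theta$ chosen so that $\theta - 1/s + 1/2s^2 - c/(2s) < 0$; taking $\theta = 1/s + c/(4s)$ works, giving each term $\ll_s |A|^{-l(1/2s^2 \text{-ish})}$, so the total number of bad tuples is $o(|A|^{\theta})$, in fact bounded by a small power less than the size; then by Markov's inequality this count is $\ll_s |A|^{\theta}/100$ with probability at least $3/4$. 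I would arrange the constants so that the expected count of bad tuples is at most $|A'|/2$ in expectation with good probability.

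Finally, on the event where $|A'| \gg |A|^{\theta}$ and the number of bad tuples is at most $|A'|/2$, I delete one element from each surviving bad configuration to obtain $B \subseteq A'$. Then $B$ contains no bad $2s$-tuple, so $B$ is a $B_s^+[1]$ set, and $|B| \ge |A'| - |A'|/2 = |A'|/2 \gg_s |A|^{\theta} \gg_s |A|^{1/s + \delta}$ with $\delta = c/(4s)$ — and by being slightly more careful with the choice of $\theta$ one recovers the stated $\delta = c/(2s)$ (the exponents in Lemma \ref{lem: sigma l,s,2} are exactly tuned so that $\theta = 1/s + c/(2s)$ is the threshold below which all bad-tuple exponents are negative). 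I expect the main obstacle to be bookkeeping: correctly accounting for the $O_s(1)$ combinatorial factors relating the number of ordered $2s$-tuples with a prescribed distinct-value pattern to $\Sigma_{l,s,2}(A)$, and verifying that the optimal choice of $p$ indeed makes \emph{every} term $l = 2, \dots, 2s$ in the sum decay, rather than just the extreme ones; since the exponent $l(\theta - 1/s + 1/2s^2 - c/(2s))$ is linear in $l$ with a fixed sign of slope, checking $l=2$ suffices, which is why the threshold is clean.
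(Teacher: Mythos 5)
Your overall approach — random sampling at probability $p$, bounding the expected number of surviving ``bad'' $2s$-tuples via Lemma~\ref{lem: sigma l,s,2}, then deleting one element per bad tuple — is exactly the paper's approach, and your formula for the exponent of the $l$-th error term, $l(\theta - 1/s + 1/2s^2 - c/(2s))$, is correct. The gap is in the threshold you impose on these exponents, and it is not a cosmetic one.

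You insist on choosing $\theta$ so that $\theta - 1/s + 1/2s^2 - c/(2s) < 0$, i.e.\ so that the expected number of surviving bad tuples at every level $l$ \emph{decays to zero}. But this is both unachievable and unnecessary. Unachievable, because for $c < 1/s$ (the interesting regime) one has $1/2s^2 - c/(2s) > 0$, and since we must also take $\theta > 1/s$ to get a nontrivial conclusion, the quantity $\theta - 1/s + 1/2s^2 - c/(2s)$ is forced to be positive; your proposed fix $\theta = 1/s + c/(4s)$ gives exponent slope $c/(4s) + 1/2s^2 - c/(2s) = 1/2s^2 - c/(4s)$, which is still positive unless $c > 2/s$. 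Unnecessary, because the deletion step only requires the expected number of bad tuples to be at most (say) half the expected size of $A'$, i.e.\ the exponents need only satisfy $l(\theta - 1/s + 1/2s^2 - c/(2s)) < \theta$ for all $2 \le l \le 2s$, not $< 0$. With the paper's choice $\theta = 1/s + \delta$, $\delta = c/(2s)$, the slope becomes exactly $1/(2s^2)$, so the worst (largest) exponent is at $l = 2s$, namely $2s/(2s^2) = 1/s$, which is indeed strictly less than $\theta = 1/s + \delta$. (Note also that this makes the binding case $l = 2s$, not $l = 2$ as you guessed.) Correcting this comparison recovers the stated $\delta = c/(2s)$ with no further ``being careful'' needed; as written, your argument does not close for small $c$.
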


\begin{proof}
We begin our proof by applying Lemma \ref{lem: sigma l,s,2} to deduce that
\begin{equation} \label{hld} 
 \Sigma_{l,s,2}(A) \ll_{s}   E_{s,2}(A)^{l/2s} \ll_{s} |A|^{l - l/s + l/2s^2 - cl/2s},
 \end{equation}
 for each $2 \leq l \leq 2s$.
We will now pick elements from $A$ with probability $p$ uniformly at random, where $p = |A|^{1/s - 1 + \delta}$, and we write this subset to be $A'$. Note that
\[ \mathbb{E} |A'| = p |A| = |A|^{1/s + \delta}, \]
as well as that 
\[ \mathbb{E} |A'| - 2\mathbb{E} \sum_{l=2}^{2s} \Sigma_{l,s,2}(A') =p|A| - 2\sum_{l=2}^{2s} p^l \Sigma_{l,s,2}(A) = |A|^{1/s + \delta} - O_{s}(\sup_{2 \leq l \leq 2s} |A|^{l\delta + l/2s^2 - cl/2s}),\]
where the last inequality follows from $\eqref{hld}$.
Our choice of $\delta$ now implies that
\[ \mathbb{E} ( |A'| - 2 \sum_{l=2}^{2s} \Sigma_{l,s,2}(A') ) \geq |A|^{1/s + \delta}/2, \]
whenever $|A|$ is sufficiently large in terms of $s$. Thus, there exists some $A' \subseteq A$ such that
\[ |A'| \geq |A|^{1/s + \delta}/2 \ \text{and} \ \sum_{l=2}^{2s} \Sigma_{l,s,2}(A') \leq |A'|/2 . \]
\par

For each $2 \leq l \leq 2s$ and for each solution $(a_1, \dots, a_{2s})$ counted in $\Sigma_{l,s,2}(A')$, we remove the element $a_1$ from $A'$, and we denote $B$ to be the remaining set. By definition, the set $B$ must be a $B_{s}^+[1]$ set. Moreover, we have that
\[ |B| \geq |A'| - \sum_{2 \leq l \leq 2s} \Sigma_{l,s,2}(A') \geq |A'|/2 \geq |A|^{1/s + \delta}/4, \]
and so, we are done.
\end{proof}

It can be shown that Lemma $\ref{gens}$ also holds for multiplicative energies and multiplicative $B_{s}[1]$ sets, but we have to apply some slight modifications to various parts of the proof. 


\begin{lemma} \label{mu2}
Let $s$ be a natural number, let $c>0$ and let $A \subset (0, \infty)$ be a finite set such that 
\[ M_{s,2}(A) \leq |A|^{2s - 2 + 1/s - c}, \]
 then there exists $A' \subseteq A$ such that $A$ is a $B_{s}^\times[1]$ set satisfying
\[ |B| \gg_s |A|^{1/s + \delta} \ \text{for} \ \delta = c/(2s). \]
\end{lemma}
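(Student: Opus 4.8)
The plan is to mimic the proof of Lemma~\ref{gens} almost verbatim, replacing additive quantities with their multiplicative analogues. The key observation is that a multiplicative $B_s^{\times}[1]$ set is, after taking logarithms, precisely an additive $B_s^{+}[1]$ set, so all the combinatorial bookkeeping transfers; the only real change is in the input estimate for the multiplicative analogue of $\Sigma_{l,s,2}$. Thus, for $A \subseteq (0,\infty)$, I would define $\Sigma^{\times}_{l,s,2}(A)$ to count the number of solutions to $a_{1,1}\cdots a_{1,s} = a_{2,1}\cdots a_{2,s}$ lying in $W_{2,l}$, and establish the bound
\[
\Sigma^{\times}_{l,s,2}(A) \ll_s M_{s,2}(A)^{l/2s} \ll_s |A|^{l - l/s + l/2s^2 - cl/2s}
\]
for each $2 \leq l \leq 2s$. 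This is the multiplicative version of Lemma~\ref{lem: sigma l,s,2}, and it follows by the same Fourier/H\"older argument applied to $g(\alpha) = \sum_{a \in A} e(\alpha \log a)$ in place of $f$, since the multiplicative equation becomes additive in $\log a$; alternatively, one can set $L = \{\log a : a \in A\}$, note $|L| = |A|$ and $E_{s,2}(L) = M_{s,2}(A)$, and quote Lemma~\ref{lem: sigma l,s,2} directly for $L$.

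With this estimate in hand, I would carry out exactly the random sampling and deletion steps of Lemma~\ref{gens}: pick a random subset $A' \subseteq A$ by retaining each element independently with probability $p = |A|^{1/s - 1 + \delta}$ where $\delta = c/(2s)$, compute $\mathbb{E}|A'| = |A|^{1/s+\delta}$ and
\[
\mathbb{E}\Big(|A'| - 2\sum_{l=2}^{2s}\Sigma^{\times}_{l,s,2}(A')\Big) = p|A| - 2\sum_{l=2}^{2s} p^l \Sigma^{\times}_{l,s,2}(A) \geq |A|^{1/s+\delta}/2
\]
for $|A|$ large in terms of $s$, the last inequality using the choice of $\delta$ together with the displayed bound on $\Sigma^{\times}_{l,s,2}(A)$. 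Fixing a favourable outcome $A'$, one has $|A'| \geq |A|^{1/s+\delta}/2$ and $\sum_{l} \Sigma^{\times}_{l,s,2}(A') \leq |A'|/2$; deleting one element from each bad multiplicative solution yields a set $B$ which is a $B_s^{\times}[1]$ set with $|B| \geq |A'|/2 \geq |A|^{1/s+\delta}/4 \gg_s |A|^{1/s+\delta}$, as required. (I note in passing that the statement as written says ``$A'$ is a $B_s^{\times}[1]$ set'' and then bounds $|B|$; this is a typo for ``there exists $B \subseteq A$'', and the proof produces exactly such a $B$.)

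Honestly, there is no serious obstacle here: the argument is routine once one recognizes the logarithm trick. The only point requiring a moment's care is the translation of the Fourier-analytic step, since $\log a$ need not be rational, so one should either work with $g(\alpha) = \sum_{a\in A} e(\alpha \log a)$ as a genuine function on $[0,1)$ and invoke $\int_0^1 |g(\alpha)|^{2s}\,d\alpha = M_{s,2}(A)$ via orthogonality on that torus, or — cleaner still — just pass to $L = \{\log a\}$ at the outset and apply Lemmas~\ref{lem: sigma l,s,2} and~\ref{gens} to $L$ as a black box, observing that a $B_s^{+}[1]$ subset of $L$ pulls back to a $B_s^{\times}[1]$ subset of $A$ of the same size. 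I would present the latter route, as it reduces the proof to essentially one line plus a pointer to Lemma~\ref{gens}.
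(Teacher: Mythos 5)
Your overall strategy — reduce to the additive setting via logarithms, bound the multiplicative analogue of $\Sigma_{l,s,2}$, then run the random sampling and deletion exactly as in Lemma~\ref{gens} — matches what the paper does, and you correctly spotted the typo in the statement. However, your Route~1 justification of the energy bound is false as written: for $g(\alpha) = \sum_{a\in A} e(\alpha \log a)$ the identity $\int_0^1 |g(\alpha)|^{2s}\,d\alpha = M_{s,2}(A)$ does \emph{not} hold, since $\int_0^1 e(\alpha n)\,d\alpha = \mathds{1}_{n=0}$ only when $n\in\mathbb{Z}$, and the differences $\log a_1+\cdots+\log a_s-\log a_{s+1}-\cdots-\log a_{2s}$ are generally irrational; there is no ``orthogonality on that torus.'' Your Route~2 (apply Lemma~\ref{lem: sigma l,s,2} to $L=\log A$) is the right instinct and is consistent with the way Lemma~\ref{lem: sigma l,s,2} is \emph{stated}, but its written proof uses precisely the $[0,1)$ orthogonality and hence tacitly assumes integrality, so you would be inheriting the same gap.

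The paper closes this gap differently: it bounds $\Pi_{l,s,2}(A)$ (its name for your $\Sigma^\times_{l,s,2}$) without Fourier analysis, by writing the count as $\sum_{c_1,\dots,c_l}\sum_{a_1\in A_1,\dots,a_{2s}\in A_{2s}}\mathds{1}_{a_1+\cdots+a_s=a_{s+1}+\cdots+a_{2s}}$ for the dilated copies $A_i=\pm c_i\cdot\log A$ (padded with $\{0\}$), and then invoking the multilinear H\"older inequality of Lemma~\ref{awk}, which holds for arbitrary finite real sets. Dilation invariance gives $E_{s,2}(A_i)=E_{s,2}(\log A)=M_{s,2}(A)$, and the desired bound follows. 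To repair your argument, either cite Lemma~\ref{awk} as the paper does, or replace the $[0,1)$ integral in your Route~1 with a genuine mean-value orthogonality $\lim_{T\to\infty}\frac{1}{2T}\int_{-T}^{T}$; either way the downstream sampling and deletion steps you describe are exactly the paper's and are fine.
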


\begin{proof}
For every $2 \leq l \leq 2s$, let $\Pi_{l,s,2}(A)$ be the number of all $2s$-tuples $(a_1, \dots, a_{2s}) \in A^{2s}$ satisfying $a_1  \dots a_s = a_{s+1}  \dots  a_{2s}$ such that there are precisely $l$ distinct elements amongst $a_1, \dots, a_{2s}$. Our main aim is to show that for each $2 \leq l \leq 2s$, we have
\begin{equation} \label{ann}
 \Pi_{l,s,2}(A) \ll_{s} |A|^{l - l/s + l/2s^2 - cl/2s}, 
 \end{equation}
since we can then follow the proof of Lemma $\ref{hld}$ mutatis mutandis to deduce our desired claim. 
\par

We begin our proof of $\eqref{ann}$ by noting that
\[ \Pi_{l,s,2}(A) \ll_{s} \sum_{\substack{ 0 < |c_1|, \dots, |c_{l}| \leq 2s, \\ c_1 + \dots + c_l = 0  }} \sum_{a_1, \dots, a_l \in A} \mathds{1}_{c_1 \log a_1 + \dots + c_l \log a_l = 0 } . \]
Writing $X = \{ \log a \ | \ a \in A\}$, we let $A_i = c_i \cdot X$ for every $1 \leq i \leq \min\{l, s\}$ and $A_i = - c_i \cdot X$ for every $s+1 \leq i \leq l$ and $A_i = \{0\}$ for every $l+1 \leq i \leq 2s$. Thus, the previous inequality may be rewritten as
\[ \Pi_{l,s,2}(A) \ll_{s} \sum_{\substack{ 0 < |c_1|, \dots, |c_{l}| \leq 2s, \\ c_1 + \dots + c_l = 0  }} \sum_{a_1 \in A_1, \dots, a_{2s} \in A_{2s}} \mathds{1}_{a_1 + \dots + a_s = a_{s+1} + \dots + a_{2s}}, \]
whence, we may apply Lemma $\ref{awk}$ to obtain the bound
\[ \Pi_{l,s,2}(A) \ll_{s} \sum_{\substack{ 0 < |c_1|, \dots, |c_{l}| \leq 2s, \\ c_1 + \dots + c_l = 0  }} E_{s,2}(A_1)^{1/2s} \dots E_{s,2}(A_l)^{1/2s} . \]
Finally, since the equation $x_1 + \dots + x_s = x_{s+1} + \dots + x_{2s}$ is dilation invariant, and recall that $X=\log A$, we see that $E_{s,2}(A_i) = E_{s,2}(X) = M_{s,2}(A)$, and subsequently, we get the bound
\[ \Pi_{l,s,2}(A) \ll_{s} M_{s,2}(A)^{l/2s} \sum_{\substack{ 0 < |c_1|, \dots, |c_{l}| \leq 2s, \\ c_1 + \dots + c_l = 0  }} 1 \ll_{s} |A|^{l - l/s + l/2s^2 - cl/2s}  . \qedhere \]
\end{proof}

We now prove similar results when we have good upper bounds for either $E_{2,k}(A)$ or $M_{2,k}(A)$.

\begin{lemma} \label{sidr}
Let $k \geq 2$ be a natural number, let $c, \delta>0$ be real numbers such that $\delta = c/2k$ and let $A \subset (0, \infty)$ be a finite set. If 
\[ E_{2,k}(A) \ll_{k} |A|^{k+ 1/2 - c},\]
 then there exists a $B_{2}^+[k-1]$ set $B \subseteq A$ such that $|B| \gg_k |A|^{1/2 + \delta}$. Similarly, if 
 \[ M_{2,k}(A) \ll_{k} |A|^{k+ 1/2 - c}, \]
  then there exists a $B_{2}^\times[k-1]$ set $B \subseteq A$ such that $|B| \gg_k |A|^{1/2 + \delta}$.
\end{lemma}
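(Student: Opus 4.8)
The plan is to mirror the probabilistic sampling-and-deletion scheme used for Lemma~\ref{gens} and Lemma~\ref{mu2}, but now built on the system of $k$ simultaneous equations rather than a single equation, so that the relevant counting quantity is $\Sigma_{2k-1,2,k}(A)$ (respectively its multiplicative analogue) controlled by Lemma~\ref{lem: sigma l,2,k}. First I would note that a subset $B\subseteq A$ fails to be a $B_2^+[k-1]$ set exactly when there is some $n$ with $r_2(B;n)\geq k$, i.e.\ when $B$ contains $k$ essentially distinct representations $n=a_{i,1}+a_{i,2}$; every such configuration is counted (up to an $O_k(1)$ factor accounting for orderings and for how many of the $2k$ entries coincide) by $\Sigma_{l,2,k}(B)$ summed over $2\leq l\leq 2k-1$. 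Since $k\geq 3\cdot 2 = 6$ is \emph{not} guaranteed here ($k$ can be as small as $2$), I would instead invoke Lemma~\ref{lem: sigma l,2,k} directly, which under the hypothesis $E_{2,k}(A)\ll_k|A|^{k+1/2-c}$ gives
\[ \Sigma_{2k-1,2,k}(A)\ll_k |A|^{k-1/2+1/2k-c(1-1/k)}, \]
and more generally the same Hölder argument (or monotonicity in $l$) yields $\Sigma_{l,2,k}(A)\ll_k|A|^{l-l/2+l/4k-c l/2k\cdot(\dots)}$-type bounds for each $2\leq l\leq 2k-1$; the key point I need is that each exponent is strictly less than $l(1/2+\delta)$ once $\delta=c/2k$.

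Next I would run the deletion argument: pick each element of $A$ independently with probability $p=|A|^{1/2-1+\delta}=|A|^{\delta-1/2}$ to form $A'$, so $\mathbb{E}|A'|=p|A|=|A|^{1/2+\delta}$, while $\mathbb{E}\,\Sigma_{l,2,k}(A')=p^l\Sigma_{l,2,k}(A)$ for each $l$. Plugging in the bounds on $\Sigma_{l,2,k}(A)$ and the value $p=|A|^{\delta-1/2}$, each term $p^l\Sigma_{l,2,k}(A)$ has exponent at most $l\delta - l/2 + (\text{exponent of }\Sigma_{l,2,k}(A))$, and the choice $\delta=c/2k$ makes this strictly smaller than $1/2+\delta$ for every $2\leq l\leq 2k-1$, so that
\[ \mathbb{E}\Big(|A'|-2\sum_{l=2}^{2k-1}\Sigma_{l,2,k}(A')\Big)\geq \tfrac12|A|^{1/2+\delta} \]
once $|A|$ is large in terms of $k$. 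Fixing an outcome $A'$ achieving this, I delete one variable (say $a_{1,1}$) from each of the $\leq 2\sum_l\Sigma_{l,2,k}(A')\leq|A'|$ offending configurations; the remaining set $B$ satisfies $|B|\geq|A'|/2\gg_k|A|^{1/2+\delta}$ and, by construction, contains no $n$ with $k$ essentially distinct two-term representations, hence is a $B_2^+[k-1]$ set.

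For the multiplicative statement I would repeat the scheme after taking logarithms, exactly as in the proof of Lemma~\ref{mu2}: writing $X=\{\log a: a\in A\}$, the multiplicative system $a_{i,1}a_{i,2}$ all equal translates into the additive system on $X$, and one bounds the multiplicative analogue $\Pi_{l,2,k}(A)$ of $\Sigma_{l,2,k}$ by expanding over the admissible integer coefficient vectors $(c_1,\dots,c_l)$ with $0<|c_i|\leq 4$, $\sum c_i=0$, applying Lemma~\ref{awk} (the $2s$-fold Hölder-type inequality for $E_{s,2}$ of several sets) together with dilation invariance to get $\Pi_{l,2,k}(A)\ll_k M_{2,k}(A)^{\cdot}$-type bounds matching the additive ones, since $E_{2,k}$ evaluated on a dilate $c_i\cdot X$ equals $M_{2,k}(A)$. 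Everything else is identical. The main obstacle I anticipate is purely bookkeeping: verifying that for \emph{every} $l$ in the range $2\leq l\leq 2k-1$ — not just the extreme case $l=2k-1$ handled by Lemma~\ref{lem: sigma l,2,k} — the exponent $l\delta - l/2 + (\text{exp of }\Sigma_{l,2,k})$ stays below $1/2+\delta$, which requires either citing the intermediate bounds (the same Hölder computation as in Lemma~\ref{lem: sigma l,2,k} works for general $l$, using that $a_{1,1}=a_{1,2}$ in at least one block forces at least one repetition) or a short monotonicity remark; the energy-input hypothesis is exactly strong enough to make this work with room to spare when $\delta=c/2k$.
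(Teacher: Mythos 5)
Your framework is exactly the paper's: random sampling with $p=|A|^{\delta-1/2}$, compute expectations, delete one element per offending configuration, with $\Sigma_{l,2,k}$ controlled via Lemma~\ref{lem: sigma l,2,k}. However, there is a genuine gap in how you handle the range of $l$, and it is precisely the part you flag as ``the main obstacle.''

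The missing idea is that for $s=2$, the quantities $\Sigma_{l,2,k}(A)$ \emph{vanish} for every $l\leq 2k-2$, so there is nothing to bound in that range. Indeed, suppose $(a_{1,1},\dots,a_{k,2})$ is counted by $\Sigma_{l,2,k}(A)$, meaning all $k$ block-sums are equal and the $k$ pairs $\{a_{i,1},a_{i,2}\}$ are pairwise distinct as sets. If any repetition occurs across two blocks, say $a_{i,2}=a_{j,2}$ with $i\neq j$, then $a_{i,1}+a_{i,2}=a_{j,1}+a_{j,2}$ forces $a_{i,1}=a_{j,1}$ and hence $\{a_{i,1},a_{i,2}\}=\{a_{j,1},a_{j,2}\}$, a contradiction. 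So every repetition must be within a single block, i.e.\ of the form $a_{i,1}=a_{i,2}$; and two such blocks $i\neq j$ would give $2a_{i,1}=2a_{j,1}$, again a cross-block repeat. Hence at most one value can repeat, and it repeats exactly twice, so $l\in\{2k-1,2k\}$. This reduction is what makes the lemma tractable with the tools in hand; Lemma~\ref{lem: sigma l,2,k} handles $l=2k-1$, and $l=2k$ is bounded trivially by $\Sigma_{2k,2,k}(A)\ll_k E_{2,k}(A)\ll_k|A|^{k+1/2-c}$. Your plan of extending the H\"older computation of Lemma~\ref{lem: sigma l,2,k} to general $l\leq 2k-1$ would not go through as stated (the exploited structure --- exactly one within-block coincidence --- is specific to $l=2k-1$), and is in any case unnecessary.

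A second, smaller slip: your deletion sum is written as $\sum_{l=2}^{2k-1}\Sigma_{l,2,k}(A')$, omitting $l=2k$. This is the dominant term (all $2k$ entries distinct is of course a valid configuration of $k$ essentially distinct representations), and it must be included. It is easy to include --- with $p=|A|^{\delta-1/2}$ one has $p^{2k}\Sigma_{2k,2,k}(A)\ll_k|A|^{1/2+2k\delta-c}=|A|^{1/2}$ under $\delta=c/2k$, which is $o(|A|^{1/2+\delta})$ --- but as written the argument does not remove all offending $k$-tuples. For the multiplicative part, the paper in fact takes a shortcut different from your $\Pi_{l,2,k}$ reduction in the style of Lemma~\ref{mu2}: it simply applies the additive statement to $X_1=\{\log a:a\in A\cap(0,1)\}$ and $X_2=\{\log a:a\in A\cap(1,\infty)\}$ and is done. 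Your route would also work, but is more involved than needed once the additive case is in place.
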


\begin{proof}
Recall that for every $2 \leq l \leq 2k$, the quantity $\Sigma_{l,2,k}(A)$ was defined to be the number of $2k$-tuples $(a_1, \dots, a_{2k}) \in A^{2k}$ satisfying $a_1 + a_2 = \dots = a_{2k-1} + a_{2k}$ such that there are precisely $l$ distinct elements amongst $a_1, \dots, a_{2k}$. We first claim that it suffices to consider the case when $l\geq 2k-1$. In order to see this, note that if at least three of $a_{1,1}, \dots, a_{k,2}$ equal each other, then without loss of generality, we may assume that there exist $1 \leq i < j \leq k$ such that $a_{i,2} = a_{j,2}$. But this would then imply that $a_{i,1} = a_{j,1}$, since $a_{i,1} + a_{i,2} = a_{j,1} + a_{j,2}$, whence, $\{ a_{i,1}, a_{i,2}\} = \{ a_{j,1}, a_{j,2}\}$, which contradicts our setting wherein we are only interested in distinct representations of some real number $n$ as $n = a+ b$, with $a, b \in A$.

By Lemma~\ref{lem: sigma l,2,k}, we have
\[ \Sigma_{2k-1,2,k}(A)\ll_{k} |A|^{k - 1/2 + 1/2k - c (1 - 1/k)  } , \]
and furthermore, we have the trivial bound
 \[ \Sigma_{2k,2,k}(A) \ll_k E_{2,k}(A) \ll_{k} |A|^{k + 1/2 - c} . \]
As before, we now use a random sampling argument, to pick elements $a \in A$ with probability $p = |A|^{-1/2 + \delta}$ uniformly at random, and we denote this set to be $A'$. Thus, we have that
\[ \mathbb{E}|A'| = p|A| = |A|^{1/2 + \delta}. \]
Furthermore, we see that
\begin{align*}
  \mathbb{E}|A'| -  2 \mathbb{E} \sum_{l = 2k-1}^{2k} \Sigma_{l,2,k}(A')
 &  = p |A| - 2p^{2k} \Sigma_{2k,2,k}(A) - 2p^{2k-1}  \Sigma_{2k-1,2,k}(A) \\
  & = |A|^{1/2 + \delta} - O_{k}(|A|^{1/2  - c + 2k \delta} + |A|^{1/2k - c(1-1/k) + (2k-1) \delta} ).
  \end{align*}
Since $k \geq 2$ and $\delta = c/2k$, both the error terms above can be verified to be much smaller than the main term, and consequently, we get that 
  \[   \mathbb{E} (|A'| -  2 \sum_{l = 2k-1}^{2k} \Sigma_{l,2,k}(A')) \geq |A|^{1/2 + \delta}/2 \]
whenever $|A|$ is sufficiently large in terms of $k$. This implies that there exists some $A' \subseteq A$ such that
  \[ |A'| \geq |A|^{1/2 + \delta}/2, \ \text{as well as that} \  \Sigma_{2k-1,2,k}(A') + \Sigma_{2k,2,k}(A') \leq |A'|/2. \]
For each solution $(a_1, \dots, a_{2k})$ counted by either $\Sigma_{2k-1,2,k}(A')$ or $\Sigma_{2k,2,k}(A')$, we remove the element $a_1$ from $A'$, and we write the remaining set to be $B$. By definition, $B$ is a $B_{2}^+[k-1]$ set satisfying $|B| \geq |A'|/2 \gg_{k} |A|^{1/2 + \delta}$, and so, we have proven the first conclusion recorded in Lemma $\ref{sidr}$. The multiplicative analogue can be shown to hold similarly by applying the first part of Lemma $\ref{sidr}$ for the sets $X_1 = \{ \log a \ | \ a \in A \cap (0, 1) \}$ and $X_2 = \{ \log a \ | \ a \in A \cap (1, \infty)\}$.
\end{proof}

Next, we will also show that similar arguments imply that whenever $E_{s,k}(A)$ is bounded appropriately for some $k \geq 3s$, then there exists a large $B_s^+[k-1]$ set in $A$.

\begin{lemma} \label{hsh}
Let $A$ be a finite set of real numbers and let $s,k \geq 2$ be natural numbers, $k\geq 3s$, and let $c >0$. If 
\[ E_{s,k}(A) \ll |A|^{sk - k + 1/s - c}, \]
then there exists a $B_{s}^+[k-1]$ set $B \subseteq A$ such that $|B| \gg_k |A|^{1/s + \delta}$, for $\delta = c'/sk$ with $c'= \min\{(k-2s)c/k,1/s\}$.
\end{lemma}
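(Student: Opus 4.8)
The strategy is to mimic the proof of Lemma \ref{gens} (and of Lemma \ref{sidr}), replacing the input from Lemma \ref{lem: sigma l,s,2} with the more general estimate from Lemma \ref{lim2}. First I would invoke Lemma \ref{lim2}: since $k \geq 3s$ and $E_{s,k}(A) \ll_{s,k} |A|^{sk-k+1/s-c}$, we obtain
\[ \Sigma_{l,s,k}(A) \ll_{s,k} |A|^{l - l/s + 1/s - c'} \]
for every $2 \leq l \leq sk$, where $c' \geq \min\{(k-2s)c/k, 1/s\}$. As in the proof of Lemma \ref{sidr}, I would first argue that the only solutions to the system $a_{1,1}+\dots+a_{1,s} = \dots = a_{k,1}+\dots+a_{k,s}$ that are \emph{not} already ``distinct enough'' to be harmless are those counted by $\Sigma_{l,s,k}(A)$ for the relevant range of $l$; more precisely, any $(k,l)$-complex tuple violating the $B_s^+[k-1]$ property has $2 \leq l \leq sk$, so the $\Sigma_{l,s,k}$ collectively control all the ``bad'' configurations. (Here one should be slightly careful: the definition of $\Sigma_{l,s,k}$ already builds in the requirement that the $k$ blocks give essentially distinct representations, which is exactly what one wants, since a repeated block does not obstruct the $B_s^+[k-1]$ condition.)

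Next comes the random sampling and deletion step, exactly parallel to Lemma \ref{gens}. I would select each element of $A$ independently with probability $p = |A|^{1/s - 1 + \delta}$, forming $A' \subseteq A$, so that $\mathbb{E}|A'| = |A|^{1/s+\delta}$. Linearity of expectation gives
\[ \mathbb{E}\Big( |A'| - 2\sum_{l=2}^{sk} \Sigma_{l,s,k}(A') \Big) = p|A| - 2\sum_{l=2}^{sk} p^l \Sigma_{l,s,k}(A) = |A|^{1/s+\delta} - O_{s,k}\big( \sup_{2 \leq l \leq sk} |A|^{l(1/s - 1 + \delta)} |A|^{l-l/s+1/s-c'} \big). \]
The exponent in the error term is $l\delta + 1/s - c'$, which is maximised at $l = sk$, giving $sk\delta + 1/s - c'$. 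Choosing $\delta = c'/(sk)$ makes this exponent equal to $1/s$, which is strictly smaller than $1/s + \delta$; hence for $|A|$ large in terms of $s,k$ the expectation is at least $|A|^{1/s+\delta}/2$. Therefore some realisation $A'$ satisfies $|A'| \geq |A|^{1/s+\delta}/2$ and $\sum_{l=2}^{sk}\Sigma_{l,s,k}(A') \leq |A'|/2$. For each bad configuration counted by some $\Sigma_{l,s,k}(A')$ I would delete the element $a_{1,1}$, leaving a set $B$ with $|B| \geq |A'|/2 \gg_k |A|^{1/s+\delta}$; by construction $B$ is a $B_s^+[k-1]$ set, since every value now has at most $k-1$ essentially distinct representations as a sum of $s$ elements of $B$.

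The main obstacle, such as it is, is bookkeeping rather than a genuine difficulty: one must check that the error exponent is genuinely dominated by the main term for the stated value of $\delta$, and that the deletion argument really does kill \emph{all} obstructions to the $B_s^+[k-1]$ property — i.e. that a tuple with $k$ pairwise-distinct blocks summing to a common value, but with only $l < sk$ distinct entries overall, is indeed counted by $\Sigma_{l,s,k}$, while tuples with a repeated block are irrelevant. One also has to confirm that $c' > 0$ (immediate from $k \geq 3s > 2s$ and $c > 0$) so that $\delta > 0$, and that the constant $\min\{(k-2s)c/k, 1/s\}$ propagated from Lemma \ref{lim2} is the one appearing in the statement. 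No new ideas beyond those already deployed in Lemmata \ref{gens}, \ref{sidr} and \ref{lim2} are required.
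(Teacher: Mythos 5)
Your proposal is correct and follows essentially the same route as the paper: invoke Lemma~\ref{lim2} to bound $\Sigma_{l,s,k}(A)$, randomly sample with $p = |A|^{1/s-1+\delta}$, observe the dominant error exponent is $sk\delta + 1/s - c'$, set $\delta = c'/(sk)$, and delete one element per bad configuration. The only extra content you supply is the explicit justification that the error is maximised at $l = sk$ and the reminder that $\Sigma_{l,s,k}$ is defined to count only essentially distinct $k$-fold representations, both of which the paper leaves implicit.
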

\begin{proof}
We begin our proof by applying Lemma \ref{lim2} to deduce that
\begin{equation*} 
 \Sigma_{l,s,k}(A) \ll_{s,k}|A|^{l-l/s+1/s-c'}.
 \end{equation*}
We now pick elements from $A$ with probability $p$ uniformly at random, where $p = |A|^{1/s - 1 + \delta}$, and we write this subset to be $A'$. As
$ \mathbb{E} |A'| = p |A| = |A|^{1/s + \delta}, $
we have that 
\[ \mathbb{E} |A'| - 2\mathbb{E} \sum_{l=2}^{ks} \Sigma_{l,s,k}(A') =p|A| - 2\sum_{l=2}^{2s} p^l \Sigma_{l,s,k}(A) = |A|^{1/s + \delta} - O_{s}(|A|^{sk\delta + 1/s - c'}).\]
Our choice of $\delta$ now implies that
\[ \mathbb{E} ( |A'| - 2 \sum_{l=2}^{ks} \Sigma_{l,s,k}(A') ) \geq |A|^{1/s + \delta}/2, \]
whenever $|A|$ is sufficiently large in terms of $s$. Thus, there exists some $A' \subseteq A$ such that
\[ |A'| \geq |A|^{1/s + \delta}/2 \ \text{and} \ \sum_{l=2}^{ks} \Sigma_{l,s,k}(A') \leq |A'|/2 . \]
\par

For each $2 \leq l \leq ks$ and for each solution $(a_{1,1}, \dots, a_{k,s})$ counted by $\Sigma_{l,s,k}(A')$, we remove the element $a_{1,1}$ from $A'$, and we denote $B$ to be the remaining set. By definition, the set $B$ must be a $B_{s}^+[k-1]$ set. Moreover, we have that
\[ |B| \geq |A'| - \sum_{2 \leq l \leq ks} \Sigma_{l,s,k}(A') \geq |A'|/2 \geq |A|^{1/s + \delta}/4. \qedhere \]
\end{proof}


\section{Hyperbolic incidences}


Given finite, non-empty sets $X,Y \subset \mathbb{R}$, we are interested in estimating the number of solutions $H(X,Y)$ to the equation
\[ (x_1 - y_1) (x_2 - y_2)  = 1, \]
with $x_1, x_2 \in X$ and $y_1, y_2 \in Y$. By dilating the sets $X,Y$ appropriately, we may use this to study solutions to equations of the form $(x_1 - y_1)(x_2 - y_2) = \lambda$, for arbitrary $\lambda \neq 0$.
Our main goal in this section is to prove the following upper bound for $H(X,Y)$.

%

\begin{theorem} \label{hyp}
Let $X,Y \subset \mathbb{R}$ be finite sets such that $|Y|^2 \leq |X| \leq |Y|^3$. Then we have
\[ H(X,Y) \ll |X|^{1 + 1/6} |Y|^{2 - 1/2}  . \]
\end{theorem}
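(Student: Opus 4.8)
The plan is to reinterpret $H(X,Y)$ as an incidence count between the Cartesian grid $X\times X$ and a family of M\"obius transformations, and then to run a dyadic decomposition feeding into Lemma~\ref{wtin}. Indeed, $(x_1-y_1)(x_2-y_2)=1$ is equivalent to $x_2=M_{\vec u}(x_1)$ with $\vec u=(y_2,\,1-y_1y_2,\,-y_1)$, because
\[
M_{\vec u}(x)=\frac{y_2x+1-y_1y_2}{x-y_1}=y_2+\frac{1}{x-y_1},
\]
so that $H(X,Y)=I(X\times X,\mathcal H)$ with $\mathcal H=\{\,M_{(y_2,1-y_1y_2,-y_1)}:(y_1,y_2)\in Y^2\,\}$. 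The non-degeneracy hypothesis of Lemma~\ref{wtin} holds since $u_1u_3=-y_1y_2\neq 1-y_1y_2=u_2$, and $(y_1,y_2)\mapsto M_{\vec u}$ is injective (the pole recovers $y_1$, the value at infinity recovers $y_2$), whence $|\mathcal H|=|Y|^2$. A couple of the auxiliary ingredients want positive sets, so it is harmless to split $X=(X\cap(0,\infty))\cup(X\cap(-\infty,0))$, and likewise $Y$, wherever needed, at the cost of absolute constants only.

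Next I would dyadically decompose $\mathcal H$ by richness: for dyadic $P$ with $1\le P\le|X|$ let $\mathcal H_P$ collect the transformations incident to between $P$ and $2P$ points of $X\times X$, write $h_P=|\mathcal H_P|$, so that $H(X,Y)\ll\sum_P Ph_P$. On each level I would apply Lemma~\ref{wtin}, taking the weight $w$ to record the multiplicities that arise when transformations sharing a pole are grouped --- this is exactly what the weighted form is designed for --- obtaining an inequality of the shape
\[
Ph_P\ll |X|^{4/3}\bigl(\textstyle\sum_{\vec u}w(\vec u)^2\bigr)^{1/3}\bigl(\textstyle\sum_{\vec u}w(\vec u)\bigr)^{1/3}+\sup_{\vec u}w(\vec u)\,|X|^2+|X|^{12/11}\bigl(\textstyle\sum_{\vec u}w(\vec u)^2\bigr)^{2/11}\bigl(\textstyle\sum_{\vec u}w(\vec u)\bigr)^{7/11}\log|X|+\textstyle\sum_{\vec u}w(\vec u).
\]
Combining this with the trivial bounds $h_P\le|Y|^2$, $P\le|X|$ and $\sum_{\vec u}w(\vec u)\le|Y|^2$, and using the full strength of $|Y|^2\le|X|\le|Y|^3$ to decide which term dominates on which range of $P$, one should be able to extract a bound on $h_P$ that decays fast enough in $P$. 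Summing the resulting geometric series (for small $P$, bounding $Ph_P\le P|Y|^2$ and using $P\le|X|^{7/6}|Y|^{-1/2}$; for large $P$, using the decaying bound on $h_P$) and absorbing the $O(\log|X|)$ dyadic ranges would then give $H(X,Y)\ll|X|^{7/6}|Y|^{3/2}$.

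The step I expect to be the crux is making Lemma~\ref{wtin} actually bite in the range $|X|\ge|Y|^2$. There the naive application (weight $\equiv 1$, $|\mathcal H|=|Y|^2$) is governed by the term $\sup w\,|X|^2=|X|^2$, which already exceeds the trivial incidence bound $|X|\cdot|\mathcal H|=|X||Y|^2$ and so yields nothing. The improvement must therefore be squeezed out of the auxiliary term $|X|^{12/11}(\sum w^2)^{2/11}(\sum w)^{7/11}\log|X|$, played against the richness parameter $P$, together with the observation that the actual incidence count on a level never exceeds $h_P|X|\le|X||Y|^2<|X|^2$, so the saving genuinely has to come from the other terms; the hypothesis $|X|\le|Y|^3$ is precisely what forces the decaying bounds on $h_P$ below $|Y|^2$ before $P$ reaches $|X|$. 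The remaining delicate points are (i) choosing the weights so that no single term blows up, (ii) controlling the logarithmic losses from the dyadic pigeonholing so as to land on the clean, log-free estimate, and (iii) isolating and separately treating the near-extremal configuration in which $\asymp|Y|^2$ of the translated hyperbolas each carry $\asymp|X|$ points of $X\times X$; this would force, for many $y_1\in Y$, the set $Y+\tfrac{1}{X-y_1}$ to be almost contained in $X$, which is impossible once $|X|<|Y|^3$ --- either directly, since $|Y+\tfrac{1}{X-y_1}|\ge|X|+|Y|-1$, or via the Pl\"unnecke--Ruzsa and Balog--Szemer\'edi--Gowers tools of Section~3 together with a sum--product estimate.
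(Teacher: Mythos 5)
You correctly identify the core obstacle---that in the range $|X|\ge|Y|^2$ the $\sup w\cdot|X|^2$ term in Lemma~\ref{wtin} dominates the trivial bound $|X||Y|^2$ and so the direct incidence count $I(X\times X,\mathcal H)$ with $|\mathcal H|=|Y|^2$ and weight $\equiv 1$ is useless---but your proposed escape does not actually get around it. The dyadic decomposition by richness does not help: since $h_P\le|Y|^2\le|X|$, one always has $h_P\le |X|$, so the first incidence term $|X|^{4/3}h_P^{2/3}$ is itself $\le |X|^2$; consequently, on every level the bound you obtain is $Ph_P\ll|X|^2$, and summing over the $O(\log|X|)$ dyadic levels gives $H(X,Y)\ll|X|^2\log|X|$, which is strictly weaker than the target $|X|^{7/6}|Y|^{3/2}$ throughout the allowed range (since $|X|^{5/6}\ge|Y|^{5/3}>|Y|^{3/2}$). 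In other words, richness decomposition alone cannot damp the $|X|^2$ term because that term carries no $h_P$-dependence. Your item~(iii)---isolating near-extremal configurations via sumset growth or BSG---is a plausible-sounding but entirely undeveloped separate argument, and I see no indication it would close the remaining multiplicative gap of $|X|^{5/6}/|Y|^{3/2}$.

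The idea that is missing, and which the paper uses, is to apply Cauchy--Schwarz \emph{before} invoking the incidence machinery: writing $H(X,Y)\le |X|^{1/2}H_1(X,Y)^{1/2}$ where $H_1(X,Y)$ counts solutions of $y_2+(x_1-y_1)^{-1}=y_3+(x_3-y_4)^{-1}$. This replaces the one-variable energy on the $X$ side by a genuine second moment and, after discarding the diagonal $y_2=y_3$, re-expresses $H_1$ as a weighted M\"obius incidence problem parameterised by $(y_1,y_4,d)$ with $d\in((Y-Y)\setminus\{0\})^{-1}$ and weight $w(\vec u)=r(d^{-1})$. Now $\sup w\le|Y|$, $\sum w=|Y|^4$, $\sum w^2\le|Y|^5$, and crucially $\sup w\cdot|X|^2=|Y||X|^2\le|X|^{4/3}|Y|^3$ precisely because $|X|\le|Y|^3$. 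This is what tames the offending term; only then does Lemma~\ref{wtin} deliver $H_1\ll|X|^{4/3}|Y|^3$, and hence $H(X,Y)\ll|X|^{7/6}|Y|^{3/2}$. So your M\"obius reformulation and your instinct that the weighted lemma is the right tool are both correct, but without the preliminary Cauchy--Schwarz step the weights you have access to are too weak, and the argument as proposed cannot be completed.
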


\begin{proof}
Let $X,Y$ be finite subsets of $\mathbb{R}$ such that $|Y|^2 \leq |X| \leq |Y|^3$. Note that
\begin{align*}
    H(X,Y) = \sum_{x_1, x_2, y_1, y_2} \mathds{1}_{x_2 = y_2 + (x_1 - y_1)^{-1}} = \sum_{u} (\sum_{x_2} \mathds{1}_{x_2 = u}) (\sum_{x_1, y_1, y_2} \mathds{1}_{u = y_2 + (x_1 - y_1)^{-1}}). 
\end{align*}
Applying the Cauchy-Schwarz inequality, we see that
\begin{align*}
    H(X,Y) & \leq (\sum_{u} \sum_{x_1, x_2} \mathds{1}_{x_1 = x_2 = u})^{1/2} (\sum_{u} \sum_{x_1, y_1, y_2, x_3, y_3, y_4} \mathds{1}_{u = y_2 + (x_1 - y_1)^{-1} = y_3 + (x_3 - y_4)^{-1}})^{1/2} \\
  &  = |X|^{1/2} H_1(X,Y)^{1/2}, 
\end{align*}
where $H_1(X,Y)$ counts the number of solutions to the equation
\begin{equation} \label{cas3}
    y_2 + (x_1 - y_1)^{-1} = y_3 + (x_3 - y_4)^{-1} ,
\end{equation} 
with $x_1, x_3 \in X$ and $y_1, \dots, y_4 \in Y$. Thus, it suffices to show that
\[ H_1(X,Y) \ll |X|^{4/3} |Y|^{3}. \]
\par

We begin this endeavour by considering the solutions where $y_2 = y_3$, which can trivially be bounded above by $|X||Y|^3$, and so, it suffices to assume that $y_2 \neq y_3$. We denote $I_1$ to be the number of such solutions. Rewriting $\eqref{cas3}$ as
\[ x_1 = \frac{ x_3 (y_1  +(y_3 - y_2)^{-1} ) + (y_1 - y_4)(y_3 - y_2)^{-1} - y_1 y_4 }{x_3 +(y_3 - y_2)^{-1} - y_4}, \]
we see that $I_1$ counts the number of solutions to the equation
\[ x_1 = \frac{ x_3 (y_1  +d ) + (y_1 - y_4)d - y_1 y_4 }{x_3 +d - y_4} \ \ \text{with} \ d \neq 0, \]
where each solution $x_1, x_3,y_1,y_4,d$ is being counted with the weight $r(d^{-1})$, with $r(n) = |\{(y,y') \in Y \times Y \ | \ n = y - y'\}|$ for each $n \in \mathbb{R}$. Furthermore, setting 
\[ u_1 = y_1 + d \ \text{and} \ u_2 =  (y_1 - y_4)d - y_1 y_4 \ \text{and} \ u_3 = d - y_4, \]
we see that the preceding expression corresponds to the equation $x_1 = M_{\vec{u}}(x_3)$. Since $u_1 u_3 - u_2 = d^2 \neq 0$, we may deduce that each choice of $(u_1, u_2, u_3)$ corresponds to at most $2$ choices of $(y_1, y_4, d)$, which, in turn, allows us to provide upper bounds for $I_1$ in terms of weighted incidences between sets of points and M\"{o}bius transformations. The latter can then be estimated using Lemma $\ref{wtin}$, and in particular, we get that
\begin{align*}
 I_1 \ll  
 & \ |X|^{4/3} \big(\sum_{\vec{n} \in H} w(\vec{n})^2 \big)^{1/3} \big(\sum_{\vec{n} \in H} w(\vec{n}) \big)^{1/3} \ + \ \sup_{\vec{n} \in H} w(\vec{n}) |X|^2 \\
& +  |X|^{12/11} \big(\sum_{\vec{n} \in H} w(\vec{n})^2\big)^{2/11} \big(\sum_{\vec{n} \in H} w(\vec{n}) \big)^{7/11} \log |X| \ +  \sum_{\vec{n} \in H} w(\vec{n}),
\end{align*}
where $H = Y\times Y \times ((Y-Y)\setminus \{0\})^{-1}$ and $w(\vec{n}) = r(n_3^{-1})$. Using double counting, we see that
\[ \sum_{\vec{n} \in H} w(\vec{n}) = |Y|^4 \ \text{and} \  \sum_{\vec{n} \in H} w(\vec{n})^2 = |Y|^2 E_{2,2}(Y) \leq |Y|^5 \ \text{and} \sup_{\vec{n} \in H} w(\vec{n}) \leq |Y|,  \]
whence, 
\[ I_1 \ll |X|^{4/3} |Y|^3  + |X|^{12/11}  |Y|^{38/11} \log |X|+ |X|^2|Y| + |Y|^4 \ll |X|^{4/3} |Y|^3, \]
with the second inequality following from the fact that $|Y|^2 \leq |X| \leq |Y|^3$. Utilising this along with the bound $H_1(X,Y) \ll I_1 + |X| |Y|^3$ finishes the proof of Theorem $\ref{hyp}$.
\end{proof}

It appears to be that Theorem $\ref{hyp}$ provides the best known bounds for such hyperbolic incidences in the regime when $|Y|^2 \leq |X| \leq |Y|^3$, and we refer the reader to \cite{RW2021} for more details on the problem of estimating $H(X,Y)$ in various other settings.

We conclude this section by recording examples of sets $X,Y$ which provide large values of $H(X,Y)$.

\begin{Proposition} \label{1cons}
There exist arbitrarily large sets $X, Y \subset \mathbb{Q}$ such that $|X| \geq |Y|$ and 
\[H(X,Y) \gg |X||Y| e^{c \frac{\log |Y|}{ \log \log |Y|}}, \]
for some absolute constant $c>0$.
\end{Proposition}

\begin{proof} 
 We will show that there exist arbitrarily large sets $X, Y \subseteq \mathbb{N}$, some perfect square $n\geq 1$ and some absolute constant $c>0$ such that
\begin{equation} \label{eop}
     \sum_{x_1, x_2 \in X, y_1, y_2 \in  Y} \mathds{1}_{(x_1 - y_1)(x_2 - y_2) = n} \gg |X||Y| e^{c \frac{\log |Y|}{\log \log |Y|}},      
\end{equation} 
since the claimed result follows by considering the sets $n^{-1/2} \cdot X$ and $n^{-1/2} \cdot Y$. In this endeavour, we set $r$ to be some sufficiently large integer, $n = (p_1 \dots p_r)^{10}$ where $p_1, \dots, p_r$ are the first $r$ prime numbers and $S = \{ p_1^{e_1}\dots p_r^{e_r} : 0 \leq e_i \leq 10\}$. Observe that for any finite, non-empty set $Y \subseteq \mathbb{N}$ and any $X = Y + S$, we have that
\begin{equation} \label{j49}
 \sum_{x_1, x_2 \in X, y_1, y_2 \in  Y} \mathds{1}_{(x_1 - y_1)(x_2 - y_2) = n}  \gg |Y|^2 |S| \geq |X||Y|.
\end{equation} 
 We will now show that upon setting  $Y = \{1,2,\dots,n\}$, one can obtain a slightly better lower bound. Indeed, by the prime number theorem, we may deduce that
\[    n =  e^{10 \sum_{i=1}^r \log p_i}  \leq e^{C r \log r}, \]
for some absolute constant $C >0$, whence, $r  \gg \log n/ \log \log n.$ Combining this with the definition of $S$, we get that
\[  |S| \geq 10^r \geq  e^{c \log n/ \log \log n},  \]
for some absolute constant $c>0$. This may now be combined with \eqref{j49} and the fact that $n = |Y| \leq |X| \leq 2|Y|$ to obtain \eqref{eop}.
\end{proof}

We suspect that the above example indicates the optimal upper bound for $H(X,Y)$. It would be of independent interest to improve the above estimates for $H(X,Y)$, both by improving the power saving as well as extending the range $|X| \leq |Y|^3$ in Theorem \ref{hyp}. A combination of  Szemerédi--Trotter Theorem and Theorem \ref{hyp} implies that for any $\delta>0$ and any $X, Y \subseteq \mathbb{R}$ satisfying $|Y| \leq |X| \leq |Y|^{3 - \delta}$, one has $H(X,Y) \ll |X|^{1 - \delta/60}|Y|^2$. For the more general range where $|X| \leq |Y|^C$, with $C \geq 1$ being some absolute constant, one can follow Bourgain's ideas \cite{Bo2012} amalgamated with growth results in $\mathrm{SL}_2(\mathbb{C})$ to show that $H(X,Y) \ll_C |X|^{1 - c'} |Y|^2$, with a very weak dependence of $c'>0$ on $C>0$.


\section{Proofs of Theorems $\ref{th3}, \ref{th2}$ and $\ref{th1}$}

We dedicate this section to write the proofs of Theorems $\ref{th3}, \ref{th2}$ and $\ref{th1}$. We may assume that $|A| \geq 10$, since otherwise we are done by adjusting the implicit constant in the Vinogradov notation. Moreover, note that it suffices to prove the aforementioned results for sets $A$ of positive integers, since the equations $x_1 + \dots + x_s = x_{s+1} + \dots + x_{2s}$ and $x_1 \dots x_{s} = x_{s+1} \dots x_{2s}$ are dilation invariant and since
\[ \max\{|A \cap (0, \infty)|, |A \cap (-\infty, 0)| \} \gg |A|+1 .\]
We now present our proof of Theorem $\ref{th3}$.

\begin{proof}[Proof of Theorem $\ref{th3}$]
  We begin our proof by applying Lemma $\ref{mu1}$ to obtain the existence of disjoint sets $X,Y$ satisfying $A = X \cup Y$ and the fact that
\[ E_{s,2}(X) \ll_{s} |X|^{2s - \sigma_s} \ \text{and} \ M_{s,2}(Y) \ll_{s} |Y|^{2s - \sigma_s}, \]
for
\[ \sigma_s \geq D (\log \log s)^{1/2} (\log \log \log s)^{-1/2}. \]
Applying Lemmas $\ref{gens}$ and $\ref{mu2}$ for sets $X,Y$ respectively, we see that there exists a $B_{s}^+[1]$ set $B \subseteq X$ and a $B_{s}^\times[1]$ set $C \subseteq Y$ such that
\[ |B| \gg_{s}  |X|^{\frac{  \sigma_s + 1/s}{2s}   } \ \text{and} \ |C| \gg_{s}   |Y|^{\frac{  \sigma_s + 1/s}{2s}   } . \]
We obtain the desired conclusion by noting that $\max \{ |X|, |Y|\} \geq |A|/2$. 
\end{proof}

Next, we record our proof of Theorem $\ref{th2}$.

\begin{proof}[Proof of Theorem $\ref{th2}$]

Let $s \geq 3$, let $k=30s$ and let $c= 1/2s$. We divide our proof into two cases, and so, we first suppose that
\[ E_{s,k}(A) \leq |A|^{sk - k + 1/s - c}. \]
In this case, we may use Lemma $\ref{hsh}$ to deduce the existence of a $B_{s}^+[k-1]$ set $B$ with $|B| \gg_{s,k} |A|^{1/s + c'/sk}$ with $c'\geq\min\{c(k-2s)/k, 1/s\} \gg 1/s$, whereupon, we may assume that 
\[ \sum_{n \in sA} r_{s}(A;n)^k = E_{s,k}(A) > |A|^{sk - k + 1/s - c}. \]
This now implies that
\begin{equation} \label{ync3}
 \sup_{n \in sA} r_{s}(A;n) \geq (|A|^{sk - k + 1/s - c} |A|^{-s} )^{1/(k-1)} = |A|^{s  -1 - \nu},
 \end{equation}
where $\nu = (1 - 1/s + c)/(k-1)$. 
\par

We first deal with the case when $s \geq 4$. In this case, combining the first and the last inequality stated in Lemma $\ref{awk}$ along with the bound presented in $\eqref{ync3}$, we get that
\[ E_{2,2}(A) \geq |A|^{3 - \nu}, \]
in which case, we can apply Lemma $\ref{bsg5}$ to obtain $A' \subseteq A$ such that
\[ |A'| \gg |A|^{1 - \nu} \ \text{and} \ |A'-A'| \ll |A'|^{1 + 4\nu}. \]
Using Lemmata $\ref{prineq}$ and $\ref{so1}$, we may now infer that
\[ M_{2,2}(A') \ll |A'+A'|^2 \log |A'| \ll |A'|^{2 + 16 \nu} \log |A'|, \]
which, in turn, combines with Lemma $\ref{awk}$ to give us
\[ M_{s,2}(A') \ll |A'|^{2s - 2 + 16 \nu} \log |A'|. \]
By our choice of $k$, we have that $\log |A'| \ll_{s,k} |A'|^{\nu}$ and $17 \nu < 1/s$, and so, we can now employ Lemma $\ref{mu2}$ to obtain a  $B_{s}^\times[1]$ set $C \subseteq A'$ such that 
\[ |C| \gg |A'|^{1/s + (1/s - 17 \nu)/2s} \gg |A|^{(1/s + (1/s - 17 \nu)/2s)(1 - \nu)}=|A|^{1/s+\mu}, \]
where $\mu= (1/s - 17 \nu)( 1- \nu)/2s  - \nu/s$. 
As $c=1/2s$ and $k= 30s$, by elementary computations we have $\mu \gg 1/s^2 > 0$, which proves Theorem $\ref{th2}$ for $s\geq 4$. 


The $s=3$ case follows similarly, except this time, a straightforward application of the Cauchy-Schwarz inequality combined with $\eqref{ync3}$ implies that
\[ E_{2,2}(A) \geq|A|^{3 - \nu'}, \]
with $\nu' = 2 \nu$. In order to see this, note that
\[ r_{s}(A;n) = \sum_{a_1, a_2, a_3 \in A} \mathds{1}_{n - a_1 = a_2 + a_3} \leq |A|^{1/2} \big(\sum_{a_1 \in A} \big(\sum_{a_2, a_3 \in A} \mathds{1}_{n-a_1 = a_2 + a_3} \big)^2 \big)^{1/2} \leq |A|^{1/2} E_{2,2}(A)^{1/2}.     \]
With a lower bound for $E_{2,2}(A)$ in hand, we now proceed as in the setting when $s \geq 4$ to obtain a $B_3^\times [1]$ set $C\subseteq A$ with $|C|\gg |A|^{1/3+\mu}$, where 
\[ \mu=(1/3-34\nu)(1-2\nu)/6-2\nu/3. \]
As $k\geq 90$, we have $\mu>0$, which finishes the $s=3$ case of Theorem $\ref{th2}$. 
\end{proof}

Finally, we state the proof of Theorem $\ref{th1}$.

\begin{proof}[Proof of Theorem $\ref{th1}$]
Let $k = 32$ and $\eta = 1802/3630$ and let $\varepsilon = (1- \eta)(k-1)^{-1}$. We divide our proof into two cases, wherein, we first assume that the set 
\[ S = \{ x \in 2A \ | \ r_{2}(A; x) \geq |A|^{1 - \varepsilon} \} \]
satisfies $|S| \leq |A|^{\eta}$. In this case, we see that
\begin{align*}
 E_{2,k}(A)
 &  = \sum_{x \in 2A} r_{2}(A; x)^k = \sum_{x \in 2A \setminus S} r_{2}(A; x)^k + \sum_{x \in S} r_{2}(A; x)^k  \\
& \leq (|A|^{1 - \varepsilon})^{k-1}  \sum_{x \in 2A \setminus S} r_{2}(A; x) + |A|^k |S| \\
& \leq |A|^{k + 1 - \varepsilon(k-1)} + |A|^{k + \eta} \ll |A|^{k + \eta}.  
 \end{align*}
 We may now apply Lemma $\ref{sidr}$ to deduce the existence of some $B_{2}^+[k-1]$ set $B \subseteq A$ such that $|B| \gg_{k} |A|^{1/2 + \delta_1}$, where $\delta_1 = (1/2 - \eta)/2k$. Since $\eta < 1/2$, we have that $\delta_1 >0$, and consequently, we are done in this case.
 \par
 
Our second case is when $|S| > |A|^{\eta}$, whereupon, we choose a subset $S'$ of $S$ such that $|S'| = |A|^{\eta}$. Note that
\[ |A|^{1- \varepsilon} |S'| \leq \sum_{x \in S'} \sum_{a,b \in A} \mathds{1}_{a = x - b} = \sum_{a \in A} R(a), \]
where $R(n) = |\{(x,b) \in S' \times A \ | \ n = x- b \}|$ for each $n \in \mathbb{R}$. Writing
\[ A' = \{ a \in A \ | \ R(a) \geq |S'|/2|A|^{\varepsilon}\}, \]
we see that
\[ \sum_{a \in A \setminus A'} R(a) < |S'| |A|^{1- \varepsilon}/2, \]
which then combines with the preceding inequality to deliver the bound
\[ |S'||A'| \geq \sum_{a \in A'} R(a) = \sum_{a \in A} R(a) - \sum_{a \in A \setminus A'} R(a) > |A|^{1-\varepsilon} |S'|/2 . \]
Thus, we have that $|A'| \geq |A|^{1- \varepsilon}/2$. 
\par

We now claim that for every real number $n \neq 0$, we have that 
\[ m_{2}(A', n) \ll |A|^{1 + c + 2\varepsilon - 3c \eta},\]
where $c = 1/6$. In order to see this, we note that
\[ |A|^{2 \eta-2 \varepsilon} m_{2}(A', n) \ll \sum_{a_1, a_2 \in A'} R(a_1) R(a_2) \mathds{1}_{n = a_1 a_2} \leq H(n^{-1/2} \cdot A, n^{-1/2} \cdot S'), \]
where for any $\lambda \neq 0$ and any $X \subset \mathbb{R}$, we define $\lambda \cdot X = \{ \lambda \cdot x : x \in X\}$. Since $1/3 < \eta < 1/2$, we may use Theorem $\ref{hyp}$ to infer the bound
\[ H(n^{-1/2} \cdot A, n^{-1/2} \cdot S') \ll |A|^{1 + c} |S'|^{2 - 3c} = |A|^{1 + c + 2\eta - 3c \eta} . \]
These two inequalities combine together to give the claimed bound.
\par

With the above estimate in hand, we see that
\begin{align*}
M_{2,k}(A')
& = \sum_{n \in A'^{(2)}} m_{2}(A';n)^k \ll_k |A|^{(1 + c + 2\varepsilon - 3c \eta)(k-1)} \sum_{n \in A'^{(2)}} m_{2}(A';n) \\
& \ll_{k} |A|^{(1 + c + 2\varepsilon - 3c \eta)(k-1)} |A'|^2   \ll_{k} |A'|^{\frac{(1 + c + 2\varepsilon - 3c \eta)(k-1)}{(1- \varepsilon)} + 2} \\
& = |A'|^{k + 1/2 - \frac{(k- 3/2)(1-\varepsilon) - (1 + c + 2\varepsilon - 3c \eta)(k-1)}{(1- \varepsilon)} }.
\end{align*}
Thus, applying Lemma $\ref{sidr}$ again, we see that $A$ must have a $B_{2}^\times[k-1]$ set $C$ such that 
\[ |C| \gg_{k} |A'|^{1/2 + \frac{(k- 3/2)(1-\varepsilon) - (1 + c + 2\varepsilon - 3c \eta)(k-1)}{2k(1- \varepsilon)}} \gg_{k} |A|^{1/2 + \delta_2}, \]
where
\[ \delta_2 = \frac{(k- 3/2)(1-\varepsilon) - (1 + c + 2\varepsilon - 3c \eta)(k-1) - k \varepsilon}{2k}. \]
The reader may now verify that the facts that $c = 1/6$ and $\varepsilon = (1- \eta)/(k-1)$ and $k = 32$ and $\eta = 1802/3630$ allow us to have $\delta_2>0$, whence, we are done with the proof of Theorem $\ref{th3}$.
\end{proof}

We conclude this section by noting that our proofs of Theorems $\ref{th2}$ and $\ref{th3}$ essentially involve proving variants of the aforementioned low-energy decompositions. For instance, the following statement can be recovered from the proof of Theorem $\ref{th2}$, which may be of independent interest.

\begin{Proposition} \label{ghz}
Let $s,k \geq 4$ be natural numbers and let $c>0$ be a real number such that $\nu = (1- 1/s + c)/(k-1) \in (0, 1)$. Then given any finite set $A \subset \mathbb{Z}$, either we have that $E_{s,k}(A) \ll_{s,k} |A|^{sk - k + 1/s - c}$ or there exists $A' \subseteq A$ such that 
\[ |A'| \gg |A|^{1 - \nu} \ \text{and} \ M_{l,2}(A') \ll |A'|^{2l - 2 + 16 \nu} \log |A'|, \]
for every $l \geq 2$. 
\end{Proposition}

Similar results may be derived for the cases when $2 \leq s \leq 3$ from the above proofs, and we leave these details to the reader.



\bibliographystyle{amsbracket}
\providecommand{\bysame}{\leavevmode\hbox to3em{\hrulefill}\thinspace}

\end{document}